\numberwithin{equation}{section}
\def\Ext{\mbox{\rm Ext}\,} \def\Hom{\mbox{\rm Hom}} \def\dim{\mbox{\rm dim}\,} \def\Iso{\mbox{\rm Iso}\,}
\def\lr#1{\langle #1\rangle} \def\fin{\hfill$\square$}  \def\lra{\longrightarrow} 
  \def\Aut{\mbox{\rm Aut}\,}
\def\A{{\mathcal A}}\def\P{{\mathscr{P}}}
\theoremstyle{plain} %text of this environment is typesetted in italics
\newtheorem{theorem}{\bf Theorem}[section]
\newtheorem{lemma}[theorem]{\bf Lemma}
\theoremstyle{definition} %text of this environment is typesetted in roman letters
\newtheorem{definition}[theorem]{\bf Definition}
\newtheorem{remark}[theorem]{\bf Remark}
\begin{document}

\title[A note on Bridgeland's Hall algebras]{A note on Bridgeland's Hall algebras} %title of paper and the running head option

\author[Haicheng Zhang]{{Haicheng Zhang}} %first author's name and the running head option

%\dedicatory{Dedicated to Professor Xxx Yyy on his sixtieth birthday}

%%%%%%%%%%%%%%% footnote %%%%%%%%%%%%%%%%
\subjclass[2010]{ %2010 MSC numbers
16G20, 17B20, 17B37.
}
%In case \subjclass[2010] command is not effective
%(or the version of amsart.cls is old), write as follows:
%\renewcommand{\thefootnote}{\fnsymbol{footnote}}
%\footnote[0]{2010\textit{ Mathematics Subject Classification}.
%Primary 00; Secondary 00.}
%
\keywords{ %key words and phrases
Bridgeland's Hall algebras; Drinfeld double Hall algebras; Modified Ringel--Hall algebras.
}
%\thanks{ %acknowledgment of support etc. if any
%$^{*}$Thanks.
%}
%%%%%%%%%%%% Authors addresses %%%%%%%%%%%%%
\address{% First Author
Yau Mathematical Sciences Center, Tsinghua University,
 Beijing 100084, P. R. China\endgraf
}
\email{zhanghai14@mails.tsinghua.edu.cn}

%%%%%%%%%%%%%%%%%%%%%%%%%%%%%%%%%%%%%%%%%

\begin{abstract}
In this note, let $\A$ be a finitary hereditary abelian category with enough projectives. By using the associativity formula of Hall algebras, we give a new and simple proof of the main theorem in \cite{Yan}, which states that the Bridgeland's Hall algebra of 2-cyclic complexes of projective objects in $\A$ is isomorphic to the Drinfeld double Hall algebra of $\A$. In a similar way, we give a simplification of the key step in the proof of Theorem 4.11 in \cite{LP}.
\end{abstract}
\maketitle
\section{Introduction}
Ringel \cite{R90a} introduced the Hall algebra of a finite dimensional algebra over a finite field.
By the works of Ringel \cite{R90a,R92a,R95} and Green \cite{Gr95}, the twisted Hall algebra, called the Ringel--Hall algebra, of a finite dimensional hereditary algebra provides a realization of the positive (negative) part of the corresponding quantum group.
In order to obtain a Hall algebra description of the entire quantum group, one considers the Hall algebras of triangulated categories (for example, \cite{Kapranov}, \cite{Toen}, \cite{XiaoXu}). In \cite{Xiao}, Xiao gave a realization of the whole quantum group by constructing the Drinfeld double of the extended Ringel--Hall algebra of any hereditary algebra.

In 2013, Bridgeland \cite{Br} considered the Hall algebra of 2-cyclic complexes of projective modules over a finite dimensional hereditary algebra $A$, and achieved an algebra, called the (reduced) Bridgeland's Hall algebra of $A$, by taking some localization and reduction. He proved that there is an algebra embedding from the Ringel--Hall algebra of $A$ to its Bridgeland's Hall algebra. Moreover, the quantum group associated with \emph{A} can be embedded into the reduced Bridgeland's Hall algebra of $A$. This provides a realization of the full quantum group by Hall algebras. In \cite{Br}, Bridgeland stated without proof that the Bridgeland's Hall algebra of each finite dimensional hereditary algebra is isomorphic to the Drinfeld double of its extended Ringel--Hall algebra. Later on, Yanagida proved this statement in \cite{Yan}. With the purpose of generalizing Bridgeland's construction to a bigger class of exact categories, Gorsky \cite{Gor2} defined the so-called semi-derived Hall algebra of the category of bounded complexes of $\mathcal {E}$ for each exact category $\mathcal {E}$ satisfying certain finiteness conditions. In particular, if every object in $\mathcal {E}$ has finite projective resolution, he gave a similar construction to the category of 2-cyclic complexes of $\mathcal {E}$.
Recently, inspired by the works of Bridgeland and Gorsky, Lu and Peng \cite{LP} have generalized Bridgeland's construction to any hereditary abelian category $\A$ which may not have enough projectives, and defined an algebra for the category of 2-cyclic complexes of $\A$, called the modified Ringel--Hall algebra of $\A$. They also proved that the resulting algebra is isomorphic to the Drinfeld double Hall algebra of $\A$.

The key step in
the proof that the Bridgeland's Hall algebra or modified Ringel--Hall algebra of a hereditary abelian category $\A$ is isomorphic to its Drinfeld double Hall algebra is to check the (Drinfeld) commutator relations.
The method used in \cite{Yan} is to make the summations on the left-hand and right-hand sides of the commutator relations symmetric by some analysis of the structure of the category of 2-cyclic complexes of projectives in $\A$, as well as some complicated calculations. It seems that the process of the proof is not too intuitive. While Lu and Peng gave a characterization of some coefficients in the commutator relations by introducing two sets, and obtained the proof by means of the coincidence of the cardinalities of these two sets. Nevertheless, their characterization is a bit complicated. In this note we use the associativity formula of Hall algebras to give a more intuitive and simpler proof that the Bridgeland's Hall algebra of $\A$ is isomorphic to its Drinfeld double Hall algebra. Similarly, based on \cite{LP}, we give a simplification of the key step in the proof of Theorem 4.11 in \cite{LP}.
Explicitly, we prove the commutator relations therein by using the associativity formula of Hall algebras rather than Lemma 4.10 in \cite{LP}.

Let us fix some notations used throughout the paper.
$k$ is always a finite field with $q$ elements and set $v=\sqrt{q}$. $\A$ is always a finitary hereditary abelian $k$-category with enough projectives unless otherwise stated, we also assume that the image $\hat{M}$ of $M$ in the Grothendieck group $K_0(\mathcal{A})$ is nonzero for any nonzero object $M$ in $\mathcal{A}$ (cf. \cite{Br,Yan}).
We denote by $\Iso(\mathcal{A})$ the set of isoclasses (isomorphism classes) of objects in $\mathcal{A}$. The subcategory of $\mathcal{A}$ consisting of projective objects is denoted by $\mathscr{P}$.
For a complex $M_\bullet=(\cdots\rightarrow M_{i+1}\xrightarrow{d_{i+1}} M_{i}\rightarrow\cdots)$ in $\mathcal{A}$, its homology is denoted by $H_\ast(M_\bullet)$.
For a finite set $S$, we denote by $|S|$ its cardinality. For an object $M\in\A$, we denote by $\Aut_{\A}(M)$ the automorphism group of $M$, and set $a_M=|\Aut_{\A}(M)|$.

%Throughout the paper $\mathcal{A}$ denotes an abelian category satisfying the following conditions:
%%\begin{description}
%%\item[item a] $\mathcal{A}$ is essentially small with finite morphism spaces;
%%\item[item b] $\mathcal{A}$ is linear over $k$;
%%\item[item c] $\mathcal{A}$ is of finite global dimension and has enough projectives.
%%\end{description}
%\vspace{-1.5em}
%\begin{itemize}
% \setlength{\itemsep}{-1pt}
% \setlength{\parskip}{0pt}
% \setlength{\parsep}{0pt}
%\item[(a)] $\mathcal{A}$ is essentially small with finite morphism sets;
%\item[(b)] $\mathcal{A}$ is linear over $k$;
%\item[(c)] $\mathcal{A}$ is of finite global dimension and has enough projectives.
%\end{itemize}

\section{Preliminaries}

In this section, we collect some necessary definitions and properties. All of the materials can be
found in \cite{Br}, \cite{Sc} and \cite{Yan}.
%\subsection{Quantum enveloping algebras}
%Let $\Gamma$ be a finite graph with vertex set $\{1,\cdots,n\}$. We always assume that $\Gamma$ has no loops. Let $(a_{ij})$ be the corresponding Cartan matrix of the graph $\Gamma$. Let $U_v(\mathfrak{g})$ denote the quantum enveloping algebra of the corresponding derived Kac-Moody Lie algebra. As an associative algebra, $U_v(\mathfrak{g})$ is generated by symbols $E_i, F_i, K_i$ and $K_i^{-1}$ (for all $1\leq i \leq n$) subject to the relations (for all $1\leq i,j \leq n$)
%\begin{equation*}\begin{aligned}
%&K_i\ast K_i^{-1}=1=K_i^{-1}\ast K_i,~~~~[K_i, K_j]=0; \\
%&K_i\ast E_j=v^{a_{ij}}\cdot E_j\ast K_i,~~~~K_i\ast F_j=v^{-a_{ij}}\cdot F_j\ast K_i; \\
%&[{E_i},{F_i}] = \frac{{{K_i} - K_i^{ - 1}}}{{v - {v^{ - 1}}}}, ~~~~~~~~[{E_i},{F_j}] = 0,~ i\neq j;\\
%&\sum\limits_{n = 0}^{1 - {a_{ij}}} (-1)^n\gp {1-a_{ij}}n\cdot E_i^{n}\ast E_j\ast E_i^{1-a_{ij}-n}=0,~ i\neq j;\\
%&\sum\limits_{n = 0}^{1 - {a_{ij}}} (-1)^n\gp {1-a_{ij}}n\cdot F_i^{n}\ast F_j\ast F_i^{1-a_{ij}-n}=0,~ i\neq j.
%\end{aligned}\end{equation*}
%where the coefficients
%$$[n]=\frac{v^n-v^{-n}}{v-v^{-1}}, ~~~~[n]^{!}=\prod\limits_{i = 1}^n {[i]},~~~~\gp nm=\frac{{[n]^{!}\cdot[m]^{!}}}{{[n + m]^{!}}} $$ are quantum binomials.

\subsection{2-cyclic complexes}
Let $\mathcal{C}_2(\mathcal{A})$ be the abelian category of 2-cyclic complexes over $\mathcal{A}$. The objects of this category consist of diagrams
\begin{equation*}M_\bullet=\xymatrix{{M_1}\ar@<0.7ex>[r]^{d_1^M}&{M_0}\ar@<0.7ex>[l]^{d_0^M}}\end{equation*}
in $\mathcal{A}$ such that $d_{i+1}^M\circ d_{i}^M=0, ~i\in \mathbb{Z}_2$.
A morphism $s_\bullet: M_\bullet \rightarrow N_\bullet$ consists of a diagram
\begin{equation*}\xymatrix{{M_1}\ar@<0.7ex>[r]^{d_1^M}\ar[d]_{s_1}&{M_0}\ar@<0.7ex>[l]^{d_0^M}\ar[d]^{s_0}\\
{N_1}\ar@<0.7ex>[r]^{d_1^N}&{N_0}\ar@<0.7ex>[l]^{d_0^N}}\end{equation*}
with $s_{i+1}\circ d_i^M=d_i^N\circ s_i, ~i\in \mathbb{Z}_2$.
Two morphisms $s_\bullet, t_\bullet : M_\bullet \rightarrow N_\bullet$ are said to be \emph{homotopic} if there are morphisms $h_i : M_i \rightarrow N_{i+1},~i\in \mathbb{Z}_2,$ such that $t_i-s_i={d_{i+1}^N}\circ h_i+{h_{i+1}}\circ d_i^M,~i\in \mathbb{Z}_2$.
For an object $M_\bullet\in\mathcal{C}_2(\mathcal{A})$, we define its class in the Grothendieck group $K_0(\mathcal{A})$ to be
$$\hat{M_\bullet} := \hat{M_0}-\hat{M_1}\in K_0(\mathcal{A}).$$
Denote by ${\mathcal {K}}_2(\mathcal{A})$ the homotopy category obtained from $\mathcal{C}_2(\mathcal{A})$ by identifying homotopic morphisms. Denote by $\mathcal{C}_2(\mathscr{P}) \subset \mathcal{C}_2(\mathcal{A})$ the full subcategory whose objects are complexes of projectives in $\mathcal{A}$, and by ${\mathcal {K}}_2(\mathscr{P})$ its homotopy category.
The shift functor of complexes induces an involution of $\mathcal{C}_2(\mathcal{A})$. This involution
shifts the grading and changes the signs of differentials as follows
\[{M_ \bullet } = \xymatrix{{M_1}\ar@<0.7ex>[r]^{d_1^M}&{M_0}\ar@<0.7ex>[l]^{d_0^M}}\overset {*} \longleftrightarrow M_ \bullet ^* = \xymatrix{{M_0}\ar@<0.7ex>[r]^{-d_0^M}&{M_1}\ar@<0.7ex>[l]^{-d_1^M}.}\]

Let $\mathcal{D}^b(\mathcal{A})$ be the bounded derived category of $\mathcal{A}$, with the suspension functor $[1]$. Let $\mathcal{R}_2(\mathcal{A})$ = $\mathcal{D}^b(\mathcal{A})/[2]$ be the orbit category, also known as the root category of $\mathcal{A}$. The category $\mathcal{D}^b(\mathcal{A})$ is equivalent to the bounded homotopy category $K^b(\mathscr{P})$, since $\mathcal{A}$ is hereditary. In this case, we can equally well define  $\mathcal{R}_2(\mathcal{A})$ as the orbit category of $K^b(\mathscr{P})$.

\begin{lemma}{\rm(\cite{PX97}, \cite[Lemma 3.1]{Br})}\label{fully faithful}
There is an equivalence $D : \mathcal{R}_2(\mathcal{A})\rightarrow {\mathcal {K}}_2(\mathscr{P})$ sending a bounded complex of projectives $(P_i)_{i \in \mathbb{Z}}$ to the $2$-cyclic complex
$$\xymatrix{{\bigoplus_{i\in\mathbb{Z}} P_{2i+1}}\ar@<0.7ex>[r]^{}&{\bigoplus_{i\in\mathbb{Z}} P_{2i}}\ar@<0.7ex>[l]^{}.}$$
\end{lemma}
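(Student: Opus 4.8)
The plan is to establish the equivalence $D : \mathcal{R}_2(\mathcal{A}) \to \mathcal{K}_2(\mathscr{P})$ by exploiting the fact that $\mathcal{A}$ is hereditary, so that $\mathcal{D}^b(\mathcal{A}) \simeq K^b(\mathscr{P})$ and $\mathcal{R}_2(\mathcal{A})$ can be computed as the orbit category $K^b(\mathscr{P})/[2]$. The central observation is that a $2$-cyclic complex of projectives is precisely the data of a $\mathbb{Z}/2$-graded object with a differential, which is exactly what one obtains by folding a $\mathbb{Z}$-graded complex modulo $2$. First I would make the assignment on objects fully explicit: given a bounded complex of projectives $(P_i)_{i\in\mathbb{Z}}$ with differentials $\partial_i : P_i \to P_{i-1}$, form the two projective objects $Q_1 = \bigoplus_{i\in\mathbb{Z}} P_{2i+1}$ and $Q_0 = \bigoplus_{i\in\mathbb{Z}} P_{2i}$, and assemble the folded differentials $d_1 : Q_1 \to Q_0$ and $d_0 : Q_0 \to Q_1$ from the component maps $\partial_i$. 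The relation $\partial_{i-1}\circ\partial_i = 0$ in the original complex forces $d_0\circ d_1 = 0$ and $d_1\circ d_0 = 0$, so the output genuinely lands in $\mathcal{C}_2(\mathscr{P})$, and in fact in $\mathcal{K}_2(\mathscr{P})$ after passing to homotopy classes.

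Next I would check that $D$ is well defined on the orbit category, which requires two compatibilities. First, homotopic maps of bounded complexes must fold to homotopic maps of $2$-cyclic complexes; this is a direct unwinding of the homotopy formula $t_i - s_i = d_{i+1}^N h_i + h_{i+1} d_i^M$, since a chain homotopy in $K^b(\mathscr{P})$ decomposes into even and odd pieces that assemble into a $2$-periodic homotopy. Second, the shift functor $[2]$ on $K^b(\mathscr{P})$ must act trivially after folding: shifting a bounded complex by two positions permutes the summands of $Q_1$ among themselves and likewise for $Q_0$, and tracks the sign conventions on differentials so that $D(P_\bullet[2]) \cong D(P_\bullet)$. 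Verifying that these isomorphisms are natural, so that $D$ descends to a functor out of the orbit category $\mathcal{R}_2(\mathcal{A})$, is the bookkeeping core of the argument.

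To prove $D$ is an equivalence I would construct a quasi-inverse by unfolding. Given a $2$-cyclic complex of projectives $\xymatrix{Q_1 \ar@<0.5ex>[r]^{d_1} & Q_0 \ar@<0.5ex>[l]^{d_0}}$, one produces the $2$-periodic $\mathbb{Z}$-graded complex $\cdots \to Q_1 \xrightarrow{d_1} Q_0 \xrightarrow{d_0} Q_1 \to \cdots$ and views it as an object of the orbit category; the periodicity means only finitely much data is needed up to the $[2]$-action, matching boundedness on the other side. I would then exhibit natural isomorphisms between the fold-unfold composite and the identity in each direction. Fullness and faithfulness on morphisms reduce, via the homotopy categories, to the statement that $\mathbb{Z}/2$-graded chain maps modulo $2$-periodic homotopy correspond bijectively to $\mathbb{Z}$-graded chain maps modulo homotopy in the orbit category; essential surjectivity follows because every $2$-cyclic complex of projectives arises by folding its own periodic unfolding.

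The step I expect to be the main obstacle is the precise handling of the sign conventions and the infinite direct sums. Although each bounded complex has only finitely many nonzero terms, the folded differentials $d_1, d_0$ must be organized coherently across all the shifted copies so that the square-zero conditions hold \emph{and} so that the $[2]$-shift, which carries its own sign change on differentials (mirroring the involution $M_\bullet \leftrightarrow M_\bullet^*$ described above), is absorbed correctly; getting these signs to cancel in a way that makes $D(P_\bullet[2]) \cong D(P_\bullet)$ canonical, rather than merely up to an unspecified sign, is where the argument must be most careful. Since this equivalence is exactly the content of the cited results \cite{PX97} and \cite[Lemma 3.1]{Br}, the rigorous verification of naturality there is what one would invoke, and I would lean on those references for the detailed sign calculus rather than reproducing it in full.
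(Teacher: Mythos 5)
The paper does not prove this lemma at all: it is quoted directly from \cite{PX97} and \cite[Lemma 3.1]{Br}, so there is no internal argument to compare yours against. Judged on its own terms, your proposal has a genuine gap at the step you treat as routine, namely the construction of a quasi-inverse and the deduction of essential surjectivity.

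The objects of the orbit category $\mathcal{R}_2(\mathcal{A})\simeq K^b(\mathscr{P})/[2]$ are still \emph{bounded} complexes of projectives; only the morphism spaces are enlarged to $\bigoplus_{i}\Hom_{K^b(\mathscr{P})}(X,Y[2i])$. Your proposed quasi-inverse ``unfolds'' a $2$-cyclic complex $Q_1\rightleftarrows Q_0$ into the unbounded $2$-periodic complex $\cdots\to Q_1\to Q_0\to Q_1\to\cdots$, which is not an object of $K^b(\mathscr{P})$ and hence not an object of the orbit category; the assertion that ``the periodicity means only finitely much data is needed up to the $[2]$-action'' conflates periodic unbounded complexes with bounded ones. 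Consequently the claim that ``every $2$-cyclic complex of projectives arises by folding its own periodic unfolding'' is circular and does not establish essential surjectivity. The substantive content of the lemma is precisely that every object of $\mathcal{K}_2(\mathscr{P})$ is homotopy equivalent to the folding of a bounded complex, and this is where the hereditariness of $\mathcal{A}$ enters beyond the identification $\mathcal{D}^b(\mathcal{A})\simeq K^b(\mathscr{P})$: one uses, e.g., the decomposition $L_\bullet\cong C_M\oplus C_N^\ast\oplus K_P\oplus K_Q^\ast$ (Lemma 2.8 of the paper, itself a consequence of heredity) together with the contractibility of $K_P$ and $K_Q^\ast$ in $\mathcal{K}_2(\mathscr{P})$, so that $L_\bullet\simeq D(R_M\oplus R_N[1])$ for projective resolutions $R_M,R_N$. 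Your sketch never identifies this use of heredity; for a non-hereditary algebra the folding functor fails to be an equivalence, so any argument that does not invoke heredity at this point cannot be complete. By contrast, the sign bookkeeping for $D(P_\bullet[2])\cong D(P_\bullet)$ and the graded decomposition of chain maps and homotopies, which you flag as the main obstacle, are comparatively formal.
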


%The following lemma converts the calculation of the dimensions of $\Ext$-spaces into that of $\Hom$-spaces.
\begin{lemma}{\rm(\cite[Lemma 3.3]{Br})}\label{Ext to Hom}
If $M_\bullet,N_\bullet \in \mathcal{C}_2(\mathscr{P})$, then there exists an isomorphism of vector spaces $$\Ext_{\mathcal{C}_2(\mathcal{A})}^1(N_\bullet,M_\bullet) \cong \Hom_{{\mathcal {K}}_2(\mathcal{A})}(N_\bullet,M_\bullet^\ast).$$
\end{lemma}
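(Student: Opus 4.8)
The plan is to identify both sides with the same combinatorial data attached to a $2$-cyclic complex built from $M_\bullet$ and $N_\bullet$, and then to match the two equivalence relations. First I would read the left-hand side à la Yoneda: an element of $\Ext^1_{\mathcal{C}_2(\mathcal{A})}(N_\bullet,M_\bullet)$ is an equivalence class of short exact sequences $0\to M_\bullet\to E_\bullet\to N_\bullet\to 0$ in $\mathcal{C}_2(\mathcal{A})$. Since exactness in $\mathcal{C}_2(\mathcal{A})$ is tested degreewise and each $N_i$ lies in $\mathscr{P}$, the sequence $0\to M_i\to E_i\to N_i\to 0$ splits in $\mathcal{A}$ for $i\in\mathbb{Z}_2$; hence $E_i\cong M_i\oplus N_i$ is projective and $E_\bullet\in\mathcal{C}_2(\mathscr{P})$. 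Fixing such splittings, the only remaining freedom in $E_\bullet$ is its differential, which with respect to $E_i=M_i\oplus N_i$ is upper triangular with diagonal blocks $d^M,d^N$ and off-diagonal block a family of maps $f_i:N_i\to M_{i-1}$, $i\in\mathbb{Z}_2$.

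Next I would read off what $d^E\circ d^E=0$ says about $f=(f_0,f_1)$. A block computation shows it is equivalent to the identities $d^M_{i+1}f_i+f_{i+1}d^N_i=0$, $i\in\mathbb{Z}_2$. The key observation, and the reason the involution $(-)^\ast$ appears, is that $M_\bullet^\ast$ has components $(M^\ast)_i=M_{i+1}$ and differentials $d^{M^\ast}_i=-d^M_{i+1}$, so that these identities are \emph{precisely} the chain-map equations for a morphism $f:N_\bullet\to M_\bullet^\ast$ in $\mathcal{C}_2(\mathcal{A})$: the sign built into $M_\bullet^\ast$ matches the sign forced by the complex condition. This gives a bijection between families $f$ and $\Hom_{\mathcal{C}_2(\mathcal{A})}(N_\bullet,M_\bullet^\ast)$, hence a map $\Theta:\Hom_{\mathcal{C}_2(\mathcal{A})}(N_\bullet,M_\bullet^\ast)\to \Ext^1_{\mathcal{C}_2(\mathcal{A})}(N_\bullet,M_\bullet)$ sending $f$ to the class of the explicit extension $E_f$ with differential having off-diagonal block $f$; surjectivity of $\Theta$ is exactly the splitting observation above.

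It then remains to compute the fibres of $\Theta$. A comparison morphism $E_f\to E_{f'}$ restricting to the identity on $M_\bullet$ and inducing the identity on $N_\bullet$ is, in each degree, unipotent upper triangular, i.e.\ of the form $\mathrm{id}\oplus\mathrm{id}$ with a single off-diagonal entry $h_i:N_i\to M_i$. Requiring that it be a morphism in $\mathcal{C}_2(\mathcal{A})$ forces $f_i-f'_i=d^M_i h_i-h_{i-1}d^N_i$, which, after the substitution $h\mapsto -h$ dictated by the signs of $M_\bullet^\ast$, is exactly the statement that $f$ and $f'$ are homotopic as morphisms $N_\bullet\to M_\bullet^\ast$. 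Hence $E_f$ and $E_{f'}$ are equivalent extensions if and only if $f$ and $f'$ are homotopic, so $\Theta$ descends to a bijection $\Hom_{{\mathcal{K}}_2(\mathcal{A})}(N_\bullet,M_\bullet^\ast)\xrightarrow{\sim}\Ext^1_{\mathcal{C}_2(\mathcal{A})}(N_\bullet,M_\bullet)$. Finally I would check $k$-linearity: the off-diagonal block $f$ is additive in the extension data, so the Baer sum of $E_f$ and $E_{f'}$ is $E_{f+f'}$ and $\lambda\cdot[E_f]=[E_{\lambda f}]$, giving an isomorphism of vector spaces.

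I expect the main obstacle to be purely bookkeeping: tracking the $\mathbb{Z}_2$-indices and the signs introduced by $(-)^\ast$ carefully enough to see that the complex condition $d^E\circ d^E=0$ coincides with the chain-map condition, and that extension-equivalence coincides with homotopy. Once the signs are pinned down the rest is formal. A secondary point worth stating cleanly is that, since all objects in sight are complexes of projectives, homotopies may be taken in $\mathcal{A}$ and $\Hom_{{\mathcal{K}}_2(\mathcal{A})}$ agrees with $\Hom_{{\mathcal{K}}_2(\mathscr{P})}$ on these objects, so there is no ambiguity on the right-hand side.
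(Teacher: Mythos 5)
Your argument is correct, and it is essentially the standard proof of this fact: the paper itself gives no proof, quoting the statement from Bridgeland (\cite[Lemma~3.3]{Br}), whose argument is exactly your degreewise-splitting computation identifying the off-diagonal block of $d^E$ with a chain map $N_\bullet\to M_\bullet^\ast$ and extension-equivalence with homotopy. Your sign bookkeeping checks out against the paper's conventions ($d_i^{M^\ast}=-d_{i+1}^M$ on $(M^\ast)_i=M_{i+1}$), and the only inessential observation is that projectivity of the $N_i$ alone already suffices for the splitting step.
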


A complex $M_\bullet \in \mathcal{C}_2(\mathcal{A})$ is called \emph{acyclic} if $H_\ast(M_\bullet)=0$. Each object $P \in \mathscr{P}$ determines acyclic complexes
\[{K_P} = (\xymatrix{{P}\ar@<0.7ex>[r]^{1}&{P}\ar@<0.7ex>[l]^{0}}),  ~~~~~~~K_P^* = (\xymatrix{{P}\ar@<0.7ex>[r]^{0}&{P}\ar@<0.7ex>[l]^{1}}).\]

\begin{lemma}{\rm(\cite[Lemma 3.2]{Br})}\label{zero}
For each acyclic complex $M_\bullet \in \mathcal{C}_2(\mathscr{P})$, there are objects $P,Q \in \mathscr{P}$, unique up to isomorphism, such that $M_\bullet \cong K_P \bigoplus K_Q^*$.
\end{lemma}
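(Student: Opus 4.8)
The plan is to turn the acyclicity of $M_\bullet$ into two short exact sequences which are forced to split because $\mathcal{A}$ is hereditary, and then to assemble the resulting splittings into an explicit isomorphism in $\mathcal{C}_2(\mathscr{P})$. The passage to the homotopy category via Lemma~\ref{fully faithful} is tempting, but there $K_P$ and $K_P^\ast$ become zero objects, so it would only yield a homotopy equivalence and not the genuine isomorphism asserted here; I therefore work directly in $\mathcal{C}_2(\mathscr{P})$.

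First I would record what acyclicity means. Writing $M_\bullet=(M_1\xrightarrow{d_1}M_0\xrightarrow{d_0}M_1)$ with $d_1d_0=d_0d_1=0$, the vanishing of $H_\ast(M_\bullet)$ says precisely that $\im d_1=\ker d_0$ and $\im d_0=\ker d_1$. Set $P:=\im d_1=\ker d_0$ and $Q:=\im d_0=\ker d_1$. Since $M_0,M_1\in\mathscr{P}$ and $\mathcal{A}$ is hereditary, every subobject of a projective object is again projective, whence $P,Q\in\mathscr{P}$. The differential $d_1$ then induces a short exact sequence $0\to Q\to M_1\to P\to 0$ and $d_0$ induces $0\to P\to M_0\to Q\to 0$; because $P$ and $Q$ are projective, both sequences split.

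Next I would make these splittings explicit and verify compatibility with the differentials. Let $j_1\colon P\hookrightarrow M_0$ and $j_0\colon Q\hookrightarrow M_1$ be the inclusions of $\ker d_0$ and $\ker d_1$, and choose sections $s_1\colon P\to M_1$ and $s_0\colon Q\to M_0$ of the corestrictions of $d_1$ and $d_0$ onto their images, so that $d_1s_1=j_1$ and $d_0s_0=j_0$. Define $\phi_1=(s_1,j_0)\colon P\oplus Q\to M_1$ and $\phi_0=(j_1,s_0)\colon P\oplus Q\to M_0$; each is an isomorphism since it realizes the splitting of the corresponding sequence. Using $d_1j_0=0$ and $d_0j_1=0$, a direct computation gives $d_1\phi_1=\phi_0\,\mathrm{diag}(\operatorname{id}_P,0)$ and $d_0\phi_0=\phi_1\,\mathrm{diag}(0,\operatorname{id}_Q)$, which are exactly the differentials of $K_P\oplus K_Q^\ast$. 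Hence $(\phi_1,\phi_0)$ is an isomorphism $M_\bullet\cong K_P\oplus K_Q^\ast$ in $\mathcal{C}_2(\mathscr{P})$.

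Finally, uniqueness is immediate once one observes that $P$ and $Q$ are intrinsic to $M_\bullet$: any isomorphism $M_\bullet\cong K_P\oplus K_Q^\ast$ forces $P\cong\im d_1$ and $Q\cong\im d_0$, so $P,Q$ are determined up to isomorphism. The only genuine subtlety, and the step I would treat most carefully, is the bookkeeping in the preceding paragraph: one must arrange the sections so that the two differentials \emph{simultaneously} take the block forms $\mathrm{diag}(\operatorname{id}_P,0)$ and $\mathrm{diag}(0,\operatorname{id}_Q)$, and it is the hereditariness of $\mathcal{A}$ that guarantees these sections exist. No deeper difficulty is expected.
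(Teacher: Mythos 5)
Your proof is correct and complete. The paper itself does not prove this lemma but only cites \cite[Lemma 3.2]{Br}, and your argument --- setting $P=\im d_1=\ker d_0$, $Q=\im d_0=\ker d_1$, using hereditariness to see these are projective so the two induced short exact sequences split, and checking the splittings are automatically compatible with both differentials --- is exactly the standard argument given there.
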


\subsection{Hall algebras}
 Given objects $L,M,N \in \mathcal{A}$, let $\Ext_\mathcal{A}^1(M,N)_L \subset \Ext_\mathcal{A}^1(M,N)$ be the subset consisting of those equivalence classes of short exact sequences with middle term $L$.
\begin{definition}\label{Hall algebra of abelian category}
The \emph{Hall algebra} $\mathcal {H}(\mathcal{A})$ of $\mathcal{A}$ is the vector space over $\mathbb{C}$ with basis elements $[M] \in \Iso(\mathcal{A}$), and with multiplication defined by
\[[M] \diamond [N] = \sum\limits_{[L] \in \Iso(\mathcal{A})} {\frac{{|\Ext_\mathcal{A}^1{{(M,N)}_L}|}}{{|\Hom_\mathcal{A}(M,N)|}}} [L].\]
\end{definition}
By \cite{R90a}, the above operation $\diamond$ defines on $\mathcal {H}(\mathcal{A})$ the structure of a unital associative algebra over $\mathbb{C}$, and the class [0] of the zero object is the unit.
\begin{remark}
Given objects $L,M,N\in \A$, set
$$g_{MN}^L=|\{N'\subset L~|~N'\cong N, L/N'\cong M\}|.$$
By Riedtmann-Peng formula \cite{Riedtmann,Peng},
$$g_{MN}^{L}=\frac{|\Ext^1_{\A}(M,N)_{L}|}{|\Hom_{\A}(M,N)|}\cdot \frac{a_{L}}{a_{M}a_{N}}.$$ Thus in terms of alternative generators $[[M]]=\frac{[M]}{a_M}$, the product takes the form
$$[[M]]\diamond [[N]]= \sum\limits_{[L] \in \Iso(\mathcal{A})}g_{MN}^L[[L]],$$
which is the definition used, for example, in \cite{R90a,Sc}. The associativity of Hall algebras amounts to the following formula
\begin{equation}\label{jiehe}\sum\limits_{[M] \in \Iso(\mathcal{A})}g_{XY}^Mg_{MZ}^L=\sum\limits_{[N] \in \Iso(\mathcal{A})}g_{XN}^Lg_{YZ}^N(=:g_{XYZ}^L),\end{equation} for any objects $L,X,Y,Z\in\A$.
\end{remark}

%From now on, let $\mathcal{A}$ be an abelian category satisfying the conditions $(a),(b),(c)$.

For objects $M,N \in \mathcal{A}$, let $$\lr{M,N}:=\dim_k\Hom_{\A}(M,N)-\dim_k\Ext^1_{\A}(M,N),$$
and it descends to give a bilinear form
$$\lr{\cdot ,\cdot }: K_0(\mathcal{A})\times K_0(\mathcal{A})\longrightarrow \mathbb{Z},$$ known as the \emph{Euler form}. We also consider the \emph{symmetric Euler form}
$$(\cdot ,\cdot ): K_0(\mathcal{A})\times K_0(\mathcal{A})\longrightarrow \mathbb{Z},$$ defined by $(\alpha,\beta)=\lr{\alpha,\beta}+\lr{\beta,\alpha}$ for all $\alpha,\beta \in K_0(\mathcal{A})$.
The \emph{Ringel--Hall algebra} ${\mathcal {H}}_{\rm{tw}}(\mathcal{A})$ of $\mathcal{A}$ is the same vector space as $\mathcal {H}(\mathcal{A})$, but with multiplication defined by $$[M]\ast[N]=v^{\lr{\hat{M},\hat{N}}}\cdot[M]\diamond[N].$$
The \emph{extended Ringel--Hall algebra} $\tilde{\mathcal {H}}(\A)$ of $\A$ is defined as an extension of ${\mathcal {H}}_{\rm{tw}}(\mathcal{A})$ by adjoining symbols $K_{\alpha}$ for $\alpha\in K(\A)$, and imposing relations $$K_{\alpha}K_{\beta}=K_{\alpha+\beta},\quad K_{\alpha}[M]=v^{\lr{\alpha,\hat{M}}}[M]K_{\alpha},$$ for $\alpha,\beta\in K_0(\A)$ and $[M]\in \Iso(\mathcal{A})$.

By Green \cite{Gr95} and Xiao \cite{Xiao}, the extended Ringel--Hall algebra $\tilde{\mathcal {H}}(\A)$
is a topological bialgebra (see \cite{Sc}) with comultiplication $\Delta$ and counit $\epsilon$ defined by
$$\Delta([L]K_{\alpha})=\sum\limits_{[M],[N] \in \Iso(\mathcal{A})}v^{\lr{\hat{M},\hat{N}}}g_{MN}^L[M]K_{\alpha+\hat{N}}\otimes [N]K_{\alpha}\quad\mbox{and}\quad \epsilon([L]K_{\alpha})=\delta_{L,0}.$$
It is well known that there exists a nondegenerate symmetric bilinear
$$\varphi(-,-): \tilde{\mathcal {H}}(\A)\times\tilde{\mathcal {H}}(\A)\longrightarrow
\mathbb{C},$$ defined by
$$\varphi([M]K_{\alpha},[N]K_{\beta})=\delta_{[M],[N]}a_{M}v^{(\alpha,\beta)}.$$ This is a Hopf pairing (see for example \cite{Gr95,Sc,Xiao}).
Then the {\em Drinfeld double Hall algebra} $D(\A)$ of $\A$ is by definition the free product $\tilde{\mathcal {H}}(\A)\ast\tilde{\mathcal {H}}(\A)$ divided out by the commutator relations
(with $a,b\in\tilde{\mathcal {H}}(\A)$) \begin{equation}\label{Drinfeld}\sum\varphi(a_{(2)},b_{(1)})\cdot a_{(1)}\otimes b_{(2)}=\sum\varphi(a_{(1)},b_{(2)})(1\otimes b_{(1)})\circ (a_{(2)}\otimes1).\end{equation} Here we use Sweedler's notation: $\Delta(a)=\sum a_{(1)}\otimes a_{(2)}$.

%Using this Hopf pairing, we obtain a unique algebra structure $\circ$ on $\tilde{\mathcal {H}}(\A)\otimes_{\mathbb{C}}\tilde{\mathcal {H}}(\A)$ satisfying the following conditions
%
%$(1)$~~The maps $$\tilde{\mathcal {H}}(\A)\longrightarrow\tilde{\mathcal {H}}(\A)\otimes_{\mathbb{C}}\tilde{\mathcal {H}}(\A),~~~~a\mapsto a\otimes1$$ and
%$$\tilde{\mathcal {H}}(\A)\longrightarrow\tilde{\mathcal {H}}(\A)\otimes_{\mathbb{C}}\tilde{\mathcal {H}}(\A),~~~~a\mapsto 1\otimes a$$ are embedding of algebras.
%
%$(2)$~~For any $a,b\in\tilde{\mathcal {H}}(\A)$, $$(a\otimes1)\circ(1\otimes b)=a\otimes b.$$
%
%$(3)$~~For any $a,b\in\tilde{\mathcal {H}}(\A)$, \begin{equation}\label{Drinfeld}\sum\varphi(a_{(2)},b_{(1)})\cdot a_{(1)}\otimes b_{(2)}=\sum\varphi(a_{(1)},b_{(2)})(1\otimes b_{(1)})\circ (a_{(2)}\otimes1).\end{equation} Here we use Sweedler's notation: $\Delta(a)=\sum a_{(1)}\otimes a_{(2)}$.
%We denote by $D(\A)$ the algebra $(\tilde{\mathcal {H}}(\A)\otimes_{\mathbb{C}}\tilde{\mathcal {H}}(\A),\circ)$ defined as above, and call it the \emph{Drinfeld double Hall algebra} of $\A$.

\subsection{Bridgeland's Hall algebras}
Let $\mathcal {H}(\mathcal{C}_2(\mathcal{A}))$ be the Hall algebra of the abelian category $\mathcal{C}_2(\mathcal{A})$ defined in Definition \ref{Hall algebra of abelian category} and $\mathcal {H}(\mathcal{C}_2(\mathscr{P})) \subset \mathcal {H}(\mathcal{C}_2(\mathcal{A}))$ be the subspace spanned by the isoclasses of complexes of projective objects. Define $\mathcal {H}_{\rm{tw}}(\mathcal{C}_2(\mathscr{P}))$ to be the same vector space as $\mathcal {H}(\mathcal{C}_2(\mathscr{P}))$ with ``twisted" multiplication defined by $$[M_\bullet]\ast[N_\bullet]:=v^{\lr{\hat{M}_0,\hat{N}_0}+\lr{\hat{M}_1,\hat{N}_1}}\cdot[M_\bullet]\diamond[N_\bullet].$$ Then $\mathcal {H}_{\rm{tw}}(\mathcal{C}_2(\mathscr{P}))$ is an associative algebra (see \cite{Br}).

We have the following simple relations for the acyclic complexes ${K_P}$ and ${K_P^*}$.

\begin{lemma}{\rm(\cite[Lemma 3.5]{Br})}\label{formula}
For any object $P \in \mathscr{P}$ and any complex ${M_\bullet}\in\mathcal{C}_2(\mathscr{P})$, we have the following relations in $\mathcal {H}_{\rm{tw}}(\mathcal{C}_2(\mathscr{P}))$
\begin{alignat}{2}
&[K_P]\ast[M_\bullet]=v^{\lr{\hat{P},\hat{M}_\bullet}}[K_P \oplus M_\bullet],&\quad&[M_\bullet]\ast[K_P]=v^{-\lr{\hat{M}_\bullet,\hat{P}}}[K_P \oplus M_\bullet];\\
&[K_P^\ast]\ast[M_\bullet]=v^{-\lr{\hat{P},\hat{M}_\bullet}}[K_P^\ast \oplus M_\bullet],&\quad&[M_\bullet]\ast[K_P^\ast]=v^{\lr{\hat{M}_\bullet,\hat{P}}}[K_P^\ast \oplus M_\bullet];\\
&[K_P]\ast[M_\bullet]=v^{(\hat{P},\hat{M}_\bullet)}[M_\bullet]\ast[K_P],&\quad&[K_P^\ast]\ast[M_\bullet]=v^{-(\hat{P},\hat{M}_\bullet)}[M_\bullet]\ast[K_P^\ast]\label{community of K_P with others}.
\end{alignat}
In particular, for $P,Q \in \mathscr{P}$, we have
\begin{flalign}&[K_P]\ast[K_Q]=[K_P \oplus K_Q],~~~~[K_P]\ast[K_Q^\ast]=[K_P \oplus K_Q^\ast];\\
&[[K_P],[K_Q]]=[[K_P],[K_Q^\ast]]=[[K_P^\ast],[K_Q^\ast]]=0.\end{flalign}
\end{lemma}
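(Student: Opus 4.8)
The plan is to collapse every product in the statement to a single split term, and then to pin down the one surviving structure constant, which will be the reciprocal of a $\Hom$-cardinality. First I would observe that both $K_P$ and $K_P^\ast$ are contractible: for $K_P$ the maps $h_0=\mathrm{id}_P,\ h_1=0$ furnish a homotopy from the identity to zero, so $K_P\cong 0$ in $\mathcal{K}_2(\mathcal{A})$, and the same holds for $K_P^\ast$. Since the involution $(-)^\ast$ sends contractible complexes to contractible complexes, Lemma \ref{Ext to Hom} then forces
\[\Ext^1_{\mathcal{C}_2(\mathcal{A})}(K_P,M_\bullet)=\Ext^1_{\mathcal{C}_2(\mathcal{A})}(M_\bullet,K_P)=\Ext^1_{\mathcal{C}_2(\mathcal{A})}(K_P^\ast,M_\bullet)=\Ext^1_{\mathcal{C}_2(\mathcal{A})}(M_\bullet,K_P^\ast)=0\]
for all $M_\bullet\in\mathcal{C}_2(\mathscr{P})$. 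Consequently, in each of the four untwisted Hall products only the split extension contributes, e.g.\ $[K_P]\diamond[M_\bullet]=\frac{1}{|\Hom_{\mathcal{C}_2(\mathcal{A})}(K_P,M_\bullet)|}[K_P\oplus M_\bullet]$, and similarly for the three companions.

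Next I would evaluate the four relevant $\Hom$-spaces by a direct diagram chase in $\mathcal{C}_2(\mathcal{A})$. A morphism $K_P\to M_\bullet$ is determined freely by its component $P\to M_1$ (the other component being forced, and the remaining relation holding automatically because $d_1^Md_0^M=0$), so $\Hom_{\mathcal{C}_2(\mathcal{A})}(K_P,M_\bullet)\cong\Hom_{\A}(P,M_1)$; in the same manner $\Hom_{\mathcal{C}_2(\mathcal{A})}(M_\bullet,K_P)\cong\Hom_{\A}(M_0,P)$, $\Hom_{\mathcal{C}_2(\mathcal{A})}(K_P^\ast,M_\bullet)\cong\Hom_{\A}(P,M_0)$, and $\Hom_{\mathcal{C}_2(\mathcal{A})}(M_\bullet,K_P^\ast)\cong\Hom_{\A}(M_1,P)$. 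Because $P$, $M_0$ and $M_1$ are all projective, every $\Ext^1_{\A}$ occurring here vanishes, so each cardinality is a pure power of $v$; for instance $|\Hom_{\A}(P,M_1)|=v^{2\lr{\hat{P},\hat{M}_1}}$. Substituting these into the split products, multiplying by the appropriate twisting factor $v^{\lr{\hat{P},\hat{M}_0}+\lr{\hat{P},\hat{M}_1}}$ (or its mirror), and simplifying through $\hat{M}_\bullet=\hat{M}_0-\hat{M}_1$ produces exactly the four formulas for $K_P$ and $K_P^\ast$; e.g.\ for $[K_P]\ast[M_\bullet]$ the exponent collapses to $\lr{\hat{P},\hat{M}_0}-\lr{\hat{P},\hat{M}_1}=\lr{\hat{P},\hat{M}_\bullet}$.

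The surviving relations are then formal consequences. Dividing the two $K_P$-formulas gives $[K_P]\ast[M_\bullet]=v^{\lr{\hat{P},\hat{M}_\bullet}+\lr{\hat{M}_\bullet,\hat{P}}}[M_\bullet]\ast[K_P]=v^{(\hat{P},\hat{M}_\bullet)}[M_\bullet]\ast[K_P]$, and symmetrically for $K_P^\ast$, which is \eqref{community of K_P with others}. Specializing $M_\bullet$ to $K_Q$ or $K_Q^\ast$ and using $\hat{K_Q}=\hat{K_Q^\ast}=0$ kills the exponents, yielding $[K_P]\ast[K_Q]=[K_P\oplus K_Q]$ and $[K_P]\ast[K_Q^\ast]=[K_P\oplus K_Q^\ast]$; since moreover $K_P\oplus K_Q\cong K_Q\oplus K_P$, the three commutators vanish. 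The only genuinely delicate points are the $\Ext$-vanishing, where one should invoke Lemma \ref{Ext to Hom} together with contractibility rather than attempt a direct count, and keeping careful track of which projective component ($M_0$ or $M_1$) each $\Hom$-space reduces to; once these are in place, the remainder is routine bookkeeping with the Euler form.
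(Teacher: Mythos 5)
Your proposal is correct, and it is essentially the standard argument for this lemma (which the paper itself imports from Bridgeland's Lemma 3.5 without reproving): contractibility of $K_P$ and $K_P^\ast$ plus Lemma \ref{Ext to Hom} kills all extensions, and the four $\Hom$-space identifications $\Hom(K_P,M_\bullet)\cong\Hom_{\A}(P,M_1)$, $\Hom(M_\bullet,K_P)\cong\Hom_{\A}(M_0,P)$, $\Hom(K_P^\ast,M_\bullet)\cong\Hom_{\A}(P,M_0)$, $\Hom(M_\bullet,K_P^\ast)\cong\Hom_{\A}(M_1,P)$ are all verified correctly, after which the twist exponents collapse exactly as you state. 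No gaps.
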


By Lemmas \ref{zero} and \ref{formula}, the acyclic elements of $\mathcal {H}_{\rm{tw}}(\mathcal{C}_2(\mathscr{P}))$ satisfy the Ore conditions and thus we  have the following definition from \cite{Br}.
\begin{definition}
The \emph{Bridgeland's Hall algebra} of $\mathcal{A}$, denoted by $\mathcal {D}\mathcal {H}(\mathcal{A})$, is the localization of $\mathcal {H}_{\rm{tw}}(\mathcal{C}_2(\mathscr{P}))$ with respect to the elements $[M_\bullet]$ corresponding to acyclic complexes $M_\bullet$. In symbols, $$\mathcal {D}\mathcal {H}(\mathcal{A}):=\mathcal {H}_{\rm{tw}}(\mathcal{C}_2(\mathscr{P}))[~[M_\bullet]^{-1}~|~H_\ast(M_\bullet)=0~].$$
\end{definition}
As explained in \cite{Br}, this is the same as localizing by the elements $[K_P]$ and $[K_P^\ast]$ for all objects $P \in \mathscr{P}.$
Writing $\alpha \in K_0(\mathcal{A})$ in the form $\alpha = \hat{P}-\hat{Q}$ for some objects $P,Q \in \mathscr{P}$, one defines $K_\alpha = [K_P]\ast[K_Q]^{-1}, K_\alpha^\ast = [K_P^\ast]\ast[K_Q^\ast]^{-1}$. Note that the equalities in (\ref{community of K_P with others}) continue to hold with the elements $[K_P]$ and $[K_P^\ast]$ replaced by $K_\alpha$ and $K_\alpha^\ast$, respectively, for any $\alpha \in K_0(\mathcal{A})$.

For each object $M \in \mathcal{A}$, by \cite[Lemma 4.1]{Br}, we fix a minimal projective resolution\footnote{The notations $P_M$ and $\Omega_M$ will be used throughout the paper.} of the form
\begin{equation}\label{projective resolution}
0 \longrightarrow \Omega_M \stackrel{\delta_M}{\longrightarrow} P_M \longrightarrow M \longrightarrow 0.
\end{equation}
Set \begin{equation*}\label{C_A}
C_M:=\xymatrix{{\Omega_M}\ar@<0.7ex>[r]^-{
  \delta_M
}&**[r]{P_M.}\ar@<0.7ex>[l]^-{
  0
}}
\end{equation*}

Since the minimal projective resolution of $M$ is unique up to isomorphism, the complex $C_M$ is well-defined up to isomorphism.
\begin{lemma}{\rm(\cite[Lemma 4.2]{Br})}
Each object ${L_\bullet}\in\mathcal{C}_2(\mathscr{P})$ has a direct sum decomposition
$$L_\bullet=C_M\oplus C_N^\ast\oplus K_P\oplus K_Q^\ast.$$
Moreover, the objects $M,N\in\A$ and $P,Q\in\P$ are uniquely determined up to isomorphism.
\end{lemma}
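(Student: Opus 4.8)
The plan is to split the statement into an existence part (producing the decomposition) and a uniqueness part, and to let homology carry most of the weight of uniqueness. First I would settle uniqueness of $M$ and $N$ by computing homology of each prospective summand: the complexes $K_P$ and $K_Q^\ast$ are acyclic, $C_M$ has homology concentrated in degree $0$ equal to $M$, and $C_N^\ast$ has homology concentrated in degree $1$ equal to $N$ (one checks directly from the defining diagram that $H_1(C_N^\ast)=P_N/\im\delta_N\cong N$ and $H_0(C_N^\ast)=0$). Since $H_\ast$ is additive on direct sums, any decomposition as in the statement forces $M\cong H_0(L_\bullet)$ and $N\cong H_1(L_\bullet)$, which are intrinsic to $L_\bullet$. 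Thus $M$ and $N$ are determined up to isomorphism before any decomposition is constructed.

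For existence, the conceptual route is through Lemma~\ref{fully faithful}. Transporting $L_\bullet$ from $\mathcal{K}_2(\mathscr{P})$ to the root category $\mathcal{R}_2(\mathcal{A})=\mathcal{D}^b(\mathcal{A})/[2]$ via the equivalence $D$, heredity of $\A$ guarantees that every object of $\mathcal{D}^b(\mathcal{A})$ is the direct sum of its shifted homologies; reducing modulo $[2]$ this yields $L_\bullet\cong M\oplus N[1]$ in $\mathcal{R}_2(\mathcal{A})$. Applying $D$ and recalling that $D$ sends $M$ to $C_M$ and $N[1]$ to $C_N^\ast$, I obtain $L_\bullet\cong C_M\oplus C_N^\ast$ in $\mathcal{K}_2(\mathscr{P})$, i.e.\ a homotopy equivalence in $\mathcal{C}_2(\mathscr{P})$. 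The remaining task is to upgrade this homotopy equivalence to an honest isomorphism in $\mathcal{C}_2(\mathscr{P})$ up to acyclic summands: $\mathcal{C}_2(\mathscr{P})$ is a Frobenius category whose projective–injective objects are precisely the acyclic complexes, so an isomorphism in the stable category $\mathcal{K}_2(\mathscr{P})$ becomes an isomorphism in $\mathcal{C}_2(\mathscr{P})$ after adjoining acyclic summands. Lemma~\ref{zero} then identifies that acyclic complement as $K_P\oplus K_Q^\ast$, delivering the full decomposition.

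If one prefers to argue inside $\mathcal{C}_2(\mathscr{P})$ directly, I would instead realize $C_M$ and $C_N^\ast$ as genuine direct summands by hand. Using projectivity of $P_M$ in the resolution (\ref{projective resolution}), lift the projective cover $P_M\twoheadrightarrow M$ through the surjection $\Ker d_0^L\twoheadrightarrow H_0(L_\bullet)=M$ to a map $f_0\colon P_M\to\Ker d_0^L\subseteq L_0$; then $f_0\circ\delta_M$ lands in $\im d_1^L$, and projectivity of $\Omega_M$ together with surjectivity of $d_1^L$ onto its image produces $f_1\colon\Omega_M\to L_1$ completing a chain map $C_M\to L_\bullet$, with the dual construction giving $C_N^\ast\to L_\bullet$. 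Heredity — subobjects of projectives are projective, so the relevant short exact sequences split — is then used to verify that $C_M\oplus C_N^\ast\to L_\bullet$ is a split monomorphism whose cokernel is a projective complex with vanishing homology, to which Lemma~\ref{zero} applies. I expect the \emph{main obstacle} to be exactly this last splitting step: one cannot naively peel off the contractible complex $K_{\im d_i^L}$ one arrow at a time, because when the homology is nonzero the image $\im d_i^L$ is \emph{not} a direct summand of $L_i$ (the quotient $L_i/\im d_i^L$ is an extension involving $H_\ast(L_\bullet)$, not a projective). The correct order of operations is therefore to split off $C_M$ and $C_N^\ast$ first, so that they absorb the homology, leaving a genuinely acyclic projective complement.

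Finally, uniqueness of $P$ and $Q$ follows formally. With $M$ and $N$ now fixed, the summand $C_M\oplus C_N^\ast$ is determined up to isomorphism, so by the Krull--Schmidt property of $\mathcal{C}_2(\mathscr{P})$ (valid since $\A$ is finitary) the acyclic complement is determined up to isomorphism as well; Lemma~\ref{zero} then supplies the uniqueness of $P$ and $Q$ individually, since $K_P$ and $K_Q^\ast$ are distinguished by which of the two differentials is an isomorphism.
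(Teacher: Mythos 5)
The paper does not prove this lemma at all: it is quoted verbatim from Bridgeland (\cite[Lemma 4.2]{Br}) and used as a black box, so there is no in-paper argument to compare against. Your proposal is a correct reconstruction, and your first route is essentially Bridgeland's own: pass through the equivalence $D$ of Lemma \ref{fully faithful}, use heredity to split a derived-category object into its shifted homologies, and then descend from ${\mathcal K}_2(\mathscr{P})$ back to $\mathcal{C}_2(\mathscr{P})$ using the Frobenius structure and Lemma \ref{zero}. The homological identification $M\cong H_0(L_\bullet)$, $N\cong H_1(L_\bullet)$ is the right mechanism for uniqueness, and your diagnosis that one must split off $C_M\oplus C_N^\ast$ \emph{before} the acyclic part (rather than peeling off $K_{\im d_i^L}$, which is not a summand when homology is nonzero) is exactly the correct subtlety.

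Two small points deserve more care. First, the general Frobenius-category statement only gives $L_\bullet\oplus I\cong C_M\oplus C_N^\ast\oplus I'$ with $I,I'$ projective--injective (acyclic); to land on the asserted decomposition of $L_\bullet$ itself you must invoke Krull--Schmidt in $\mathcal{C}_2(\mathscr{P})$ (available since $\A$ is finitary and idempotents split) to cancel $I$, observing that no indecomposable summand of $C_M\oplus C_N^\ast$ is acyclic. You do use Krull--Schmidt later for uniqueness, so the ingredient is on the table, but it is also needed at this existence step. Second, in your alternative in-category construction, the split monomorphy of $C_M\oplus C_N^\ast\to L_\bullet$ is not the only gap you flag: even the injectivity of the lifted chain map requires the \emph{minimality} of the resolution (\ref{projective resolution}) (otherwise $f_0$ may have a kernel), and the splitting of the resulting short exact sequence with acyclic cokernel should be justified via Lemma \ref{Ext to Hom} together with contractibility of $K_P$ and $K_Q^\ast$. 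With these points supplied, the argument is complete.
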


As in \cite{Br}, we have an element $E_M$ in $\mathcal {D}\mathcal {H}(\mathcal{A})$ defined by
$$E_M :=v^{\lr{\hat{\Omega}_M,\hat{M}}}\cdot K_{-\hat{\Omega}_M} \ast [C_M] \in \mathcal {D}\mathcal {H}(\mathcal{A}).$$
It is easy to see that the shift functor defines an algebra involution $\ast$ on $\mathcal {D}\mathcal {H}(\mathcal{A})$. Set $F_M = E_M^\ast$ for any object $M \in \mathcal{A}$.
%\begin{proposition}{\rm(\cite[Prop. 3.2]{GP})}\label{hom}
%Given objects $M_1,M_2 \in \mathcal{A}$, take minimal projective resolutions
%$$0 \longrightarrow R_i \stackrel{r_i}{\longrightarrow} Q_i \stackrel{q_i}{\longrightarrow} P_i \stackrel{p_i}{\longrightarrow} M_i \longrightarrow 0,~~i=1,2.$$
%Then \ \ \ $|\Hom_{\mathcal{C}_2(\mathcal{A})}(C_{M_1},C_{M_2})|=$
% \begin{equation*}
%{|\Hom_{\mathcal{A}}(M_1,M_2)|\cdot|\Hom_{\mathcal{A}}(R_1,P_2)|\cdot|\Hom_{\mathcal{A}}({Q_1},R_2)|\cdot|\Hom_{\mathcal{A}}(P_1,Q_2)|
%}/{|\Hom_{\mathcal{A}}(P_1,R_2)|}.
%\end{equation*}
%\end{proposition}

%\begin{lemma}\rm{(\cite[Lemma 3.3]{GP})}\label{ext}
%For any objects $M_1,M_2 \in \mathcal{A}$, there is an isomorphism of vector spaces
%$$\Ext_{\mathcal{C}_2(\mathcal{A})}^1(C_{M_1},C_{M_2})\cong \Ext_{\mathcal{A}}^1(M_1,M_2).$$
%\end{lemma}

\begin{theorem}{\rm(\cite[Lemmas 4.6,4.7]{Br})}\label{embedding0}
The maps
\begin{equation*}
\begin{split}&I_{+}^e:\tilde{\mathcal {H}}(\A)\hookrightarrow \mathcal {D}\mathcal {H}(\mathcal{A}), ~~~~~[M]\mapsto E_M,~~~~K_{\alpha}\mapsto K_{\alpha};\\
&I_{-}^e:\tilde{\mathcal {H}}(\A)\hookrightarrow \mathcal {D}\mathcal {H}(\mathcal{A}), ~~~~~[M]\mapsto F_M,~~~~~K_{\alpha}\mapsto K_{\alpha}^\ast
\end{split}
\end{equation*}
are both embeddings of algebras. Moreover, the multiplication map
$m:a\otimes b\mapsto I_+^e(a)\ast I_-^e(b)$ defines an isomorphism of vector spaces
$$\xymatrix{m:\tilde{\mathcal {H}}(\A)\otimes\tilde{\mathcal {H}}(\A)\ar[r]^-{\simeq}& \mathcal {D}\mathcal {H}(\mathcal{A}).}$$
\end{theorem}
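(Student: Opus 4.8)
The plan is to establish the three assertions in turn: that $I_+^e$ and $I_-^e$ are algebra homomorphisms, that each is injective, and that $m$ is an isomorphism of vector spaces. Because the shift involution $\ast$ is an algebra automorphism of $\mathcal{D}\mathcal{H}(\A)$ carrying $E_M$ to $F_M$ and interchanging $K_\alpha$ with $K_\alpha^\ast$, it is enough to treat $I_+^e$ in detail; the statements for $I_-^e$ then follow by applying $\ast$.

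To see that $I_+^e$ respects the defining relations of $\tilde{\mathcal{H}}(\A)$, I would first dispatch the toral relations: $K_\alpha\ast K_\beta=K_{\alpha+\beta}$ holds by construction, and $K_\alpha\ast E_M=v^{\lr{\alpha,\hat{M}}}E_M\ast K_\alpha$ follows from \eqref{community of K_P with others}, which continues to hold after localization. The substantive point is the multiplicativity $E_M\ast E_N=I_+^e([M]\ast[N])$. Unwinding $E_M=v^{\lr{\hat{\Omega}_M,\hat{M}}}K_{-\hat{\Omega}_M}\ast[C_M]$ and using the $K$-commutation relations to gather the toral factors, this reduces to computing the product $[C_M]\ast[C_N]$ in $\mathcal{H}_{\rm{tw}}(\mathcal{C}_2(\P))$.

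For that computation I would study the extensions $0\to C_N\to L_\bullet\to C_M\to 0$ in $\mathcal{C}_2(\P)$. Each of $C_M$ and $C_N$ has homology concentrated in degree $0$, namely $M$ and $N$, the degree-$1$ homology vanishing because $\delta_M$ and $\delta_N$ are injective. Hence the six-term (two-periodic) long exact homology sequence forces $H_\ast(L_\bullet)$ to vanish in degree $1$ and to fit into a short exact sequence $0\to N\to H_0(L_\bullet)\to M\to 0$. By the decomposition lemma the middle term therefore splits as $L_\bullet\cong C_L\oplus K_P\oplus K_Q^\ast$, where $L=H_0(L_\bullet)$ is an extension of $M$ by $N$ in $\A$. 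Recasting the acyclic summands $K_P,K_Q^\ast$ as invertible toral elements via Lemma \ref{formula} and matching the resulting counts against the Riedtmann--Peng formula, the structure constants of $[C_M]\ast[C_N]$ collapse to the Hall numbers $g_{MN}^L$; tracking the Euler-form exponents then reproduces exactly the twisted product of $\tilde{\mathcal{H}}(\A)$, which gives $E_M\ast E_N=I_+^e([M]\ast[N])$. I expect this control of the acyclic corrections together with the Euler twists to be the most delicate step.

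Injectivity of $I_+^e$ is then immediate: the map is graded for the grading $M\mapsto\hat{M}$ on $K_0(\A)$, and the image $E_M\ast K_\alpha$ of a basis vector $[M]K_\alpha$ is a nonzero scalar multiple of the single localized basis element $[C_M]$ twisted by an invertible toral factor, so distinct pairs $(M,\alpha)$ yield linearly independent images. Finally, for the isomorphism $m$, I would exhibit the family $\{E_M\ast K_\alpha\ast F_N\ast K_\beta^\ast\}$ as a basis of $\mathcal{D}\mathcal{H}(\A)$. Spanning follows because, by the decomposition lemma together with Lemma \ref{zero}, every $[L_\bullet]$ is a monomial in the $[C_M]$, the $[C_N^\ast]$ and the invertible acyclic elements. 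For linear independence I would move the toral factors to the right and expand $E_M\ast F_N$ in the basis $\{[L_\bullet]\}$, using the homology long exact sequence for the extensions $0\to C_N^\ast\to L_\bullet\to C_M\to 0$: here the connecting map is a morphism $M\to N$, so every nonsplit contribution has strictly smaller total homology than the split term, which is a nonzero multiple of $[C_M\oplus C_N^\ast]$ times the invertible acyclics prescribed by $\alpha$ and $\beta$. Ordering the basis by the total dimension of homology renders the transition matrix triangular with invertible diagonal entries, whence $m$ is bijective.
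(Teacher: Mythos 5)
The paper does not prove this theorem at all: it is imported verbatim from Bridgeland (\cite[Lemmas 4.4--4.7]{Br}), so the only meaningful comparison is with Bridgeland's original argument. Your outline reconstructs that argument essentially faithfully: the two-periodic long exact homology sequence together with the decomposition lemma to analyse the middle terms of extensions of $C_M$ by $C_N$, the Riedtmann--Peng formula to identify the resulting structure constants with the Hall numbers $g_{MN}^L$, and, for the bijectivity of $m$, the triangularity of $E_M\ast F_N$ with respect to the total dimension of homology, driven by the observation that the connecting map of $0\to C_N^\ast\to L_\bullet\to C_M\to 0$ is a morphism $M\to N$ whose image strictly decreases $\dim H_\ast$. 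This last step is exactly Bridgeland's Lemma 4.7, and the overall architecture is sound.

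Two points need repair before this is a complete proof. First, in the computation of $[C_M]\ast[C_N]$ the homology argument only eliminates the summand $C_Y^\ast$ of the middle term; it cannot see an acyclic summand $K_Q^\ast$, and if $Q\neq 0$ occurred then $E_M\ast E_N$ would acquire factors $K_{\hat{Q}}^\ast$ and could not equal $I_+^e([M]\ast[N])$. One must rule this out by comparing kernels of the differentials, as in the proof of Lemma \ref{mainlemma} of this paper: $d_1$ is injective on $C_M$ and $C_N$, hence on the (degreewise split) middle term, whereas $\ker d_1$ on $K_Q^\ast$ is all of $Q$; this forces $Q=0$ and pins down $P$ via $P_L\oplus P\cong P_M\oplus P_N$. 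Second, both your injectivity argument and the linear-independence half of the triangularity argument silently use that the elements $K_\alpha\ast K_\beta^\ast\ast[C_X\oplus C_Y^\ast]$ form a basis of the localization $\mathcal{D}\mathcal{H}(\mathcal{A})$, i.e.\ that inverting the acyclic classes collapses nothing; this is true but is itself a nontrivial lemma of Bridgeland (the Ore-localization/free-module argument) and must be invoked, not assumed. As a minor further caution, your claim that $K_\alpha\ast E_M=v^{\lr{\alpha,\hat{M}}}E_M\ast K_\alpha$ ``follows from \eqref{community of K_P with others}'' does not literally match, since \eqref{community of K_P with others} involves the symmetric form $(\alpha,\hat{M})$; this traces back to an inconsistency in the paper's stated definition of $\tilde{\mathcal{H}}(\A)$ rather than to your argument, but the conventions should be reconciled before the homomorphism property can be asserted.
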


\section{Main Theorem}
In this section, we first present the main theorem which was stated by Bridgeland in \cite{Br}, proved by Yanagida in \cite{Yan}, and generalized by Lu and Peng in \cite{LP}. Then we provide a new and succinct proof by using the associativity formula of Hall algebras.

\noindent$\mathbf{Main~~Theorem}$~~(\cite{Br},\cite{Yan},\cite{LP})
\emph{The Bridgeland's Hall algebra $\mathcal {D}\mathcal {H}(\mathcal{A})$ is isomorphic to the Drinfeld double Hall algebra $D(\A)$.}

In what follows, we will give the proof of Main Theorem.
By Theorem \ref{embedding0}, it suffices to prove that the commutator relation
\begin{equation}\label{Drinfeld2}\sum\varphi(a_{(2)},b_{(1)})I_+^e(a_{(1)})\ast I_-^e(b_{(2)})=\sum\varphi(a_{(1)},b_{(2)})I_-^e(b_{(1)})\ast I_+^e(a_{(2)})\end{equation} holds in $\mathcal {D}\mathcal {H}(\mathcal{A})$ for each $a=[A]K_{\alpha}$ and $b=[B]K_{\beta}$ with $\alpha,\beta\in K_0(\A)$ and $[A],[B]\in\Iso(\A)$. By writing out the comultiplications $\Delta([A]K_{\alpha})$ and $\Delta([B]K_{\beta})$, and substituting into $(\ref{Drinfeld2})$, we find that we only need to prove that Relation $(\ref{Drinfeld2})$ holds for $a=[A]$ and $b=[B]$.

Since $$\Delta([A])=\sum\limits_{[A_1],[A_2]}v^{\lr{A_1,A_2}}g_{A_1A_2}^A[A_1]K_{\hat{A}_2}\otimes [A_2];$$
$$\Delta([B])=\sum\limits_{[B_1],[B_2]}v^{\lr{B_1,B_2}}g_{B_1B_2}^B[B_1]K_{\hat{B}_2}\otimes [B_2],$$
the left hand side of $(\ref{Drinfeld2})$ becomes
\begin{equation*}\begin{split}
\mbox{LHS~~of}~~(\ref{Drinfeld2})&=\sum\limits_{[A_1],[A_2],[B_1],[B_2]}v^{\lr{A_1,A_2}+\lr{B_1,B_2}}g_{A_1A_2}^A
g_{B_1B_2}^B\varphi([A_2],[B_1]K_{\hat{B}_2})E_{A_1}K_{\hat{A}_2}F_{B_2}\\
&=\sum\limits_{[A_1],[A_2],[B_2]}v^{\lr{A_1,A_2}+\lr{A_2,B_2}}g_{A_1A_2}^A
g_{A_2B_2}^Ba_{A_2}E_{A_1}K_{\hat{A}_2}F_{B_2}
\end{split}\end{equation*}
\begin{equation*}\begin{split}
&=\sum\limits_{[A_1],[A_2],[B_2]}v^{\lr{A_1,A_2}+\lr{A_2,B_2}}g_{A_1A_2}^A
g_{A_2B_2}^Ba_{A_2}v^{\lr{\Omega_{A_1},A_1}}K_{-\hat{\Omega}_{A_1}}[C_{A_1}]K_{\hat{A}_2}v^{\lr{\Omega_{B_2},B_2}}
K_{-\hat{\Omega}_{B_2}}^\ast[C_{B_2}^\ast]\\
&=\sum\limits_{[A_1],[A_2],[B_2]}v^{x}g_{A_1A_2}^A
g_{A_2B_2}^Ba_{A_2}K_{\hat{A}_2-\hat{\Omega}_{A_1}}K_{-\hat{\Omega}_{B_2}}^\ast[C_{A_1}][C_{B_2}^\ast],
\end{split}\end{equation*}
where \begin{equation*}\begin{split}x=&\lr{A_1,A_2}+\lr{A_2,B_2}+\lr{\Omega_{A_1},A_1}+\lr{\Omega_{B_2},B_2}-(A_2,A_1)-(\Omega_{B_2},A_1)\\
&=\lr{\hat{A}_2+\hat{\Omega}_{B_2},\hat{B}_2-\hat{A}_{1}}+\lr{\hat{\Omega}_{A_1},\hat{A}_1}-
\lr{\hat{A}_1,\hat{\Omega}_{B_2}}.
\end{split}\end{equation*}

\begin{equation*}\begin{split}
\mbox{RHS~~of}~~(\ref{Drinfeld2})&=\sum\limits_{[A_1],[A_2],[B_1],[B_2]}v^{\lr{A_1,A_2}+\lr{B_1,B_2}}g_{A_1A_2}^A
g_{B_1B_2}^B\varphi([A_1]K_{\hat{A}_2},[B_2])F_{B_1}K_{\hat{B}_2}^*E_{A_2}\\
&=\sum\limits_{[A'_1],[A'_2],[B'_1]}v^{\lr{A'_1,A'_2}+\lr{B'_1,A'_1}}g_{A'_1A'_2}^A
g_{B'_1A'_1}^Ba_{A'_1}F_{B'_1}K_{\hat{A}'_1}^*E_{A'_2}.
\end{split}\end{equation*}
\begin{equation*}\begin{split}
&=\sum\limits_{[A'_1],[A'_2],[B'_1]}v^{\lr{A'_1,A'_2}+\lr{B'_1,A'_1}}g_{A'_1A'_2}^A
g_{B'_1A'_1}^Ba_{A'_1}v^{\lr{\Omega_{B'_1},B'_1}}K_{-\hat{\Omega}_{B'_1}}^\ast[C_{B'_1}^\ast]K_{\hat{A}'_1}^*
v^{\lr{\Omega_{A'_2},A'_2}}K_{-\hat{\Omega}_{A'_2}}[C_{A'_2}]\\
&=\sum\limits_{[A'_1],[A'_2],[B'_1]}v^{x'}g_{A'_1A'_2}^A
g_{B'_1A'_1}^Ba_{A'_1}K_{\hat{A}'_1-\hat{\Omega}_{B'_1}}^\ast K_{-\hat{\Omega}_{A'_2}}[C_{B'_1}^\ast]
[C_{A'_2}],
\end{split}\end{equation*}
where \begin{equation*}\begin{split}
x'=&\lr{A'_1,A'_2}+\lr{B'_1,A'_1}+\lr{\Omega_{B'_1},B'_1}+\lr{\Omega_{A'_2},A'_2}-(A'_1,B'_1)-(\Omega_{A'_2},B'_1)\\
&=\lr{\hat{A}'_1+\hat{\Omega}_{A'_2},\hat{A}'_2-\hat{B}'_1}+\lr{\hat{\Omega}_{B'_1},\hat{B}'_1}-\lr{\hat{B}'_1,\hat{\Omega}_{A'_2}}.
\end{split}\end{equation*}

\begin{lemma}\label{zhongjian}
For any objects $X,Y\in\A$ and $T,W\in\P$. In $\mathcal {D}\mathcal {H}(\mathcal{A})$ we have
$$[C_X\oplus C_Y^\ast\oplus K_T\oplus K_W^\ast]=
v^{\lr{\hat{W}-\hat{T},\hat{X}-\hat{Y}}}K_{\hat{T}}K_{\hat{W}}^\ast[C_X\oplus C_Y^\ast].$$
\end{lemma}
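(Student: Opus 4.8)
The plan is to assemble $C_X\oplus C_Y^\ast\oplus K_T\oplus K_W^\ast$ out of the ``honest'' part $M_\bullet:=C_X\oplus C_Y^\ast$ by adjoining the two acyclic summands $K_T$ and $K_W^\ast$ one at a time, using the multiplication rules of Lemma~\ref{formula} to trade each orthogonal direct sum for a $\ast$-product while recording the accompanying power of $v$. The only ingredients required are the left-hand relations in the first two lines of Lemma~\ref{formula}, the commutativity $[[K_T],[K_W^\ast]]=0$ from the same lemma, and two elementary facts about classes in $K_0(\A)$.

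First I would compute the relevant Grothendieck classes. From the resolution (\ref{projective resolution}) we get $\hat X=\hat P_X-\hat\Omega_X$, so $\hat C_X=\hat X$, and dually $\hat C_Y^\ast=-\hat Y$; hence $\hat M_\bullet=\hat X-\hat Y$. The decisive observation is that the acyclic complexes carry the zero class, $\hat K_T=\hat T-\hat T=0$ and $\hat K_W^\ast=0$, so that adjoining $K_T$ does not alter the class that governs the next twist: the class of $K_T\oplus M_\bullet$ is again $\hat M_\bullet$.

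Next I would peel off the two summands. The relation $[K_T]\ast[M_\bullet]=v^{\lr{\hat T,\hat M_\bullet}}[K_T\oplus M_\bullet]$ gives $[K_T\oplus M_\bullet]=v^{-\lr{\hat T,\hat M_\bullet}}[K_T]\ast[M_\bullet]$, and applying the relation $[K_W^\ast]\ast[L_\bullet]=v^{-\lr{\hat W,\hat L_\bullet}}[K_W^\ast\oplus L_\bullet]$ with $L_\bullet=K_T\oplus M_\bullet$ (whose class is $\hat M_\bullet$) gives $[K_W^\ast\oplus K_T\oplus M_\bullet]=v^{\lr{\hat W,\hat M_\bullet}}[K_W^\ast]\ast[K_T\oplus M_\bullet]$. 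Substituting the first identity into the second and collecting exponents by bilinearity of $\lr{-,-}$ reduces the left-hand side to $v^{\lr{\hat W-\hat T,\hat M_\bullet}}[K_W^\ast]\ast[K_T]\ast[M_\bullet]$.

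Finally I would identify the acyclic factors with the localized generators. Writing $\hat T=\hat T-\hat 0$ and $\hat W=\hat W-\hat 0$ yields $[K_T]=K_{\hat T}$ and $[K_W^\ast]=K_{\hat W}^\ast$, and the commutativity relation lets me reorder $[K_W^\ast]\ast[K_T]=K_{\hat T}\ast K_{\hat W}^\ast$; substituting $\hat M_\bullet=\hat X-\hat Y$ then gives precisely the claimed identity. I do not anticipate a genuine obstacle: the argument is a short, direct computation, and the only point demanding care is the sign bookkeeping in the Euler-form exponents. It is exactly the vanishing of $\hat K_T$ and $\hat K_W^\ast$ that causes the two separate twists to coalesce into the single clean exponent $\lr{\hat W-\hat T,\hat X-\hat Y}$.
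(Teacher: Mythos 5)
Your proof is correct. It differs from the paper's in execution rather than in spirit: the paper proves the identity by one direct computation, counting $|\Hom_{\mathcal{C}_2(\mathcal{A})}(K_T\oplus K_W^\ast,\,C_X\oplus C_Y^\ast)|=q^{\lr{\hat{T},\hat{\Omega}_X+\hat{P}_Y}+\lr{\hat{W},\hat{P}_X+\hat{\Omega}_Y}}$ via an explicit matrix description of the morphisms, then reading off the exponent from the definition of the twisted product applied to $[K_T\oplus K_W^\ast]\ast[C_X\oplus C_Y^\ast]$ (the relevant extension group vanishes because $K_T\oplus K_W^\ast$ is null-homotopic), and finally simplifying to $\lr{\hat{T}-\hat{W},\hat{X}-\hat{Y}}$ using $\hat{X}=\hat{P}_X-\hat{\Omega}_X$ and $\hat{Y}=\hat{P}_Y-\hat{\Omega}_Y$. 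You never recompute a Hom space: you adjoin $K_T$ and $K_W^\ast$ one at a time by quoting the ready-made relations of Lemma \ref{formula}, and the two twists coalesce precisely because $\hat{K}_T=\hat{K}_W^\ast=0$ keeps the governing class equal to $\hat{X}-\hat{Y}$ at each step. Your route is shorter and reuses Bridgeland's Lemma 3.5 where the paper in effect re-derives a special case of it from scratch; the paper's version is more self-contained in exhibiting where the exponent comes from. The remaining bookkeeping in your argument ($[K_T]=K_{\hat{T}}$, $[K_W^\ast]=K_{\hat{W}}^\ast$, and $[K_W^\ast]\ast[K_T]=[K_T]\ast[K_W^\ast]$) is exactly how the localization is set up, so there is no gap.
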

\begin{proof}
By the commutative diagram
$$\xymatrix@R=0.6in @C=0.5in {{T\oplus W}\ar@<0.7ex>[r]^{\left({\begin{smallmatrix}{1}&\\&0\end{smallmatrix}}\right)}
\ar@<2.2ex>[d]_{\left({\begin{smallmatrix}{a}&b\\{c}&{d}\end{smallmatrix}}\right)}
&{T\oplus W}\ar@<0.7ex>[l]^{\left({\begin{smallmatrix}{0}&\\&{1}\end{smallmatrix}}\right)}
\ar@<0.7ex>[d]^{\left({\begin{smallmatrix}{a'}&b'\\{c'}&d'\end{smallmatrix}}\right)}\\
{\Omega_X\oplus P_Y}\ar@<0.7ex>[r]^{\left({\begin{smallmatrix}{\delta_X}&\\&0\end{smallmatrix}}\right)}&{P_X\oplus \Omega_Y,}\ar@<0.7ex>[l]^{\left({\begin{smallmatrix}{0}&\\&-\delta_Y\end{smallmatrix}}\right)}}$$
we easily obtain that $$|\Hom_{\mathcal{C}_2(\mathcal{A})}(K_T\oplus K_W^\ast,C_X\oplus C_Y^\ast)|=
q^{\lr{\hat{T},\hat{\Omega}_X+\hat{P}_Y}+\lr{\hat{W},\hat{P}_X+\hat{\Omega}_Y}}.$$
Hence, $$[K_T\oplus K_W^\ast][C_X\oplus C_Y^\ast]=v^m[C_X\oplus C_Y^\ast\oplus K_T\oplus K_W^\ast],$$
where \begin{equation*}\begin{split}m&=\lr{\hat{T}+\hat{W},\hat{P}_X+\hat{\Omega}_X+\hat{P}_Y+\hat{\Omega}_Y}-2\lr{\hat{T},\hat{\Omega}_X+\hat{P}_Y}-2\lr{\hat{W},\hat{P}_X+\hat{\Omega}_Y}\\
&=\lr{\hat{T}-\hat{W},\hat{X}-\hat{Y}}.
\end{split}\end{equation*}
\end{proof}

For any fixed objects $A,B,X,Y\in\A$, we denote by $W_{AB}^{XY}$ the set
$$\{(f,g,h)~|~\xymatrix{0\ar[r]&X\ar[r]^f&A\ar[r]^g&B\ar[r]^h&Y\ar[r]&0}~~\mbox{is~~exact~~in}~~\A\},$$
and denote by ${}_X\Hom_{\A}(A_1,B_2)_Y$ the set $$\{g~|~\xymatrix{0\ar[r]&X\ar[r]&A\ar[r]^g&B\ar[r]&Y\ar[r]&0}~~\mbox{is exact in}~~\A\}.$$
By \cite[(8.8)]{Vanden},  $$|W_{AB}^{XY}|=a_Xa_Y\sum\limits_{[L]}a_Lg_{LX}^Ag_{YL}^B,$$
and it is easy to see that \begin{equation}\label{lHoml}|{}_X\Hom_{\A}(A,B)_Y|=\frac{|W_{AB}^{XY}|}{a_Xa_Y}.\end{equation}

\begin{lemma}\label{mainlemma}
$(1)$~~For any $A_1,B_2\in\A$. In $\mathcal {D}\mathcal {H}(\mathcal{A})$ we have
$$[C_{A_1}][C_{B_2}^\ast]=\sum_{[X],[Y],[L]}v^{a}a_Lg_{LX}^{A_1}g_{YL}^{B_2}K_{\hat{\Omega}_{A_1}-\hat{\Omega}_X}K_{\hat{P}_{B_2}-\hat{P}_Y}^\ast[C_X\oplus C_Y^\ast],$$
where $$a=\lr{P_{A_1},\Omega_{B_2}}-\lr{\Omega_{A_1},P_{B_2}}+\lr{\hat{P}_{B_2}+\hat{\Omega}_X-\hat{\Omega}_{A_1}-\hat{P}_Y,\hat{X}-\hat{Y}};$$

$(2)$~~For any $A_2,B_1\in\A$. In $\mathcal {D}\mathcal {H}(\mathcal{A})$ we have
$$[C_{B_1}^\ast][C_{A_2}]=\sum_{[X],[Y],[L']}v^{a'}a_{L'}g_{L'Y}^{B_1}g_{XL'}^{A_2}K_{\hat{P}_{{A}_2}-\hat{P}_X}K_{\hat{\Omega}_{B_1}-\hat{\Omega}_Y}^\ast[C_X\oplus C_Y^\ast],$$
where $$a'=\lr{P_{B_1},\Omega_{A_2}}-\lr{\Omega_{B_1},P_{A_2}}+\lr{\hat{P}_X+\hat{\Omega}_{B_1}-\hat{P}_{A_2}-\hat{\Omega}_{Y},\hat{X}-\hat{Y}}.$$
\end{lemma}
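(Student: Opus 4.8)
The plan is to prove $(1)$ by a direct evaluation of the twisted Hall multiplication in $\mathcal{C}_2(\A)$, followed by moving the acyclic summand of each middle term to the side via Lemma \ref{zhongjian}. Unwinding the twist,
$$[C_{A_1}]\ast[C_{B_2}^\ast]=v^{\lr{\hat{P}_{A_1},\hat{\Omega}_{B_2}}+\lr{\hat{\Omega}_{A_1},\hat{P}_{B_2}}}\sum_{[L_\bullet]}\frac{|\Ext^1_{\mathcal{C}_2(\A)}(C_{A_1},C_{B_2}^\ast)_{L_\bullet}|}{|\Hom_{\mathcal{C}_2(\A)}(C_{A_1},C_{B_2}^\ast)|}[L_\bullet],$$
so I first compute the denominator. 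Inspecting the two commuting-square conditions for a chain map $C_{A_1}\to C_{B_2}^\ast$, the degree-$0$ component $P_{A_1}\to\Omega_{B_2}$ is forced to vanish (because $\delta_{B_2}$ is monic), while the degree-$1$ component $\Omega_{A_1}\to P_{B_2}$ is unconstrained. Since $\Omega_{A_1}\in\P$, this yields $|\Hom_{\mathcal{C}_2(\A)}(C_{A_1},C_{B_2}^\ast)|=q^{\lr{\hat{\Omega}_{A_1},\hat{P}_{B_2}}}$.

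Next I would parametrize the middle terms. By Lemma \ref{Ext to Hom}, $\Ext^1_{\mathcal{C}_2(\A)}(C_{A_1},C_{B_2}^\ast)\cong\Hom_{\mathcal{K}_2(\A)}(C_{A_1},C_{B_2})$, and via the equivalence $D$ of Lemma \ref{fully faithful} together with the hereditariness of $\A$ (which collapses the root category so that only $\Hom_\A(A_1,B_2)$ survives) this group is identified with $\Hom_\A(A_1,B_2)$. Concretely, an extension is presented by a chain map $(p,\omega)\colon C_{A_1}\to C_{B_2}$ lifting some $f\in\Hom_\A(A_1,B_2)$, and the resulting middle term is the $2$-periodic complex $L_\bullet$ with $L_0=\Omega_{B_2}\oplus P_{A_1}$ and $L_1=P_{B_2}\oplus\Omega_{A_1}$ (both splittings forced by projectivity of $P_{A_1}$ and $\Omega_{A_1}$), whose differentials $d_1^L,d_0^L$ are the evident $2\times 2$ matrices built from $\delta_{A_1},\delta_{B_2},p,\omega$.

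The heart of the argument is to read off the Bridgeland decomposition $L_\bullet\cong C_X\oplus C_Y^\ast\oplus K_T\oplus K_W^\ast$ from this matrix form. Computing homology gives $H_0(L_\bullet)\cong\Ker f=:X$ and $H_1(L_\bullet)\cong\Coker f=:Y$, which pins down the non-acyclic part as $C_X\oplus C_Y^\ast$; computing the kernels of the differentials gives $\Ker d_1^L\cong P_Y\oplus W$ and $\Ker d_0^L\cong P_X\oplus T$, whence $\hat{W}=\hat{P}_{B_2}-\hat{P}_Y$ and $\hat{T}=\hat{\Omega}_{A_1}-\hat{\Omega}_X$. In particular $L_\bullet$ is determined up to isomorphism by $(X,Y)$, so the number of extensions producing a given $(X,Y)$ is the number of $f\in\Hom_\A(A_1,B_2)$ with $\Ker f\cong X$ and $\Coker f\cong Y$, that is $|{}_X\Hom_\A(A_1,B_2)_Y|=\sum_{[L]}a_Lg_{LX}^{A_1}g_{YL}^{B_2}$ by $(\ref{lHoml})$ and \cite[(8.8)]{Vanden}. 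This is exactly the combinatorial factor appearing in the statement, and I expect this identification of the acyclic classes $\hat{T},\hat{W}$ from the kernels of $d_1^L,d_0^L$ to be the one genuinely delicate step, since everything else reduces to Euler-form bookkeeping.

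It then remains to assemble the scalars. Substituting $[L_\bullet]=v^{\lr{\hat{W}-\hat{T},\hat{X}-\hat{Y}}}K_{\hat{T}}K_{\hat{W}}^\ast[C_X\oplus C_Y^\ast]$ from Lemma \ref{zhongjian}, replacing the denominator $q^{\lr{\hat{\Omega}_{A_1},\hat{P}_{B_2}}}=v^{2\lr{\hat{\Omega}_{A_1},\hat{P}_{B_2}}}$, and collecting powers of $v$, the twist and the two surviving terms combine to $\lr{\hat{P}_{A_1},\hat{\Omega}_{B_2}}-\lr{\hat{\Omega}_{A_1},\hat{P}_{B_2}}+\lr{\hat{W}-\hat{T},\hat{X}-\hat{Y}}$; using $\hat{W}-\hat{T}=\hat{P}_{B_2}+\hat{\Omega}_X-\hat{\Omega}_{A_1}-\hat{P}_Y$ this is precisely $a$, while the $K$-factors are $K_{\hat{\Omega}_{A_1}-\hat{\Omega}_X}K_{\hat{P}_{B_2}-\hat{P}_Y}^\ast$, giving $(1)$. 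Statement $(2)$ follows by running the same computation for the product $[C_{B_1}^\ast]\ast[C_{A_2}]$, or equivalently by applying the algebra involution $\ast$; this interchanges the roles of the two factors and of kernel and cokernel, and produces the sum $\sum_{[L']}a_{L'}g_{L'Y}^{B_1}g_{XL'}^{A_2}=|{}_Y\Hom_\A(B_1,A_2)_X|$ with the exponent $a'$ as stated.
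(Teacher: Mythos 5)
Your proposal is correct and follows essentially the same route as the paper's own proof: identify $\Ext^1_{\mathcal{C}_2(\mathcal{A})}(C_{A_1},C_{B_2}^\ast)$ with $\Hom_{\A}(A_1,B_2)$, read off $X=\Ker f$ and $Y=\Coker f$ from the homology of the middle term, pin down the acyclic summands $K_T,K_W^\ast$ from the kernels of the differentials (the paper gets $P_Y\oplus W\cong P_{B_2}$ and then $\Omega_X\oplus T\cong\Omega_{A_1}$ by cancellation, which matches your classes $\hat W,\hat T$), count extensions via $|{}_X\Hom_{\A}(A_1,B_2)_Y|$, and clean up with Lemma \ref{zhongjian}. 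The scalar bookkeeping, including $|\Hom_{\mathcal{C}_2(\mathcal{A})}(C_{A_1},C_{B_2}^\ast)|=q^{\lr{\hat{\Omega}_{A_1},\hat{P}_{B_2}}}$, agrees with the paper's.
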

\begin{proof}
We only prove $(1)$, since $(2)$ is similar.
\begin{equation}\label{Iso}
\begin{split}
\Ext^1_{\mathcal{C}_2(\mathcal{A})}(C_{A_1},C_{B_2}^\ast)&\cong\Hom_{{\mathcal {K}}_2(\mathscr{P})}(C_{A_1},C_{B_2})\\
&\cong\Hom_{\mathcal{R}_2(\mathcal{A})}(A_1,B_2)\\
&\cong\oplus_{i\in\mathbb{Z}}\Hom_{D^b(\A)}(A_1,B_2[2i])\\
&\cong\Hom_{\A}(A_1,B_2),~~\mbox{since}~~\A~~\mbox{is}~~\mbox{hereditary.}
\end{split}\end{equation}

Consider an extension of $C_{A_1}$ by $C_{B_2}^\ast$
$$\eta: 0\longrightarrow C_{B_2}^\ast\longrightarrow L_{\bullet}\longrightarrow C_{A_1}\longrightarrow 0.$$ It induces a long exact sequence in homology
$$H_0(C_{B_2}^\ast)\longrightarrow H_0(L_\bullet)\longrightarrow H_0(C_{A_1})\longrightarrow
H_1(C_{B_2}^\ast)\longrightarrow H_1(L_\bullet)\longrightarrow H_1(C_{A_1}).$$
Writing $L_\bullet=C_X\oplus C_Y^\ast \oplus K_T\oplus K_W^\ast$ for some objects $X,Y\in\A$ and $T,W\in\P$, we obtain the following exact sequence
$$0\longrightarrow X\longrightarrow A_1\stackrel{\delta}{\lra} B_2\longrightarrow Y\longrightarrow 0,$$
where $\delta$ is determined by the equivalence class of $\eta$ via the canonical isomorphism in $(\ref{Iso})$
\begin{equation}\label{biaozhun}\Ext^1_{\mathcal{C}_2(\mathcal{A})}(C_{A_1},C_{B_2}^\ast)\cong\Hom_{\A}(A_1,B_2).\end{equation}
By considering the kernels of differentials in $C_{B_2}^\ast, L_\bullet$ and $C_{A_1}$, we get that
$$P_Y\oplus W\cong P_{B_2},~~\mbox{and~~then}~~~\Omega_X\oplus T\cong \Omega_{A_1}.$$
That is, $T$ and $W$ are uniquely determined by $X$ and $Y$ up to isomorphism, respectively.
The canonical isomorphism $(\ref{biaozhun})$ induces an isomorphism
\begin{equation*}
\Ext^1_{\mathcal{C}_2(\mathcal{A})}(C_{A_1},C_{B_2}^\ast)_{C_X\oplus C_Y^\ast\oplus K_T\oplus K_W^\ast}
\cong {}_X\Hom_{\A}(A_1,B_2)_Y.
\end{equation*}
Hence, \begin{equation*}\begin{split}|\Ext^1_{\mathcal{C}_2(\mathcal{A})}(C_{A_1},C_{B_2}^\ast)_{C_X\oplus C_Y^\ast\oplus K_T\oplus K_W^\ast}|=\frac{|W_{A_1B_2}^{XY}|}{a_Xa_Y}
=\sum_{[L]}a_Lg_{LX}^{A_1}g_{YL}^{B_2}.\end{split}\end{equation*}

Thus, \begin{equation*}\begin{split}
[C_{A_1}][C_{B_2}^\ast]=\sum_{[X],[Y],[L]}v^{\lr{P_{A_1},\Omega_{B_2}}-\lr{\Omega_{A_1},P_{B_2}}}a_Lg_{LX}^{A_1}g_{YL}^{B_2}
[C_X\oplus C_Y^\ast\oplus K_T\oplus K_W^\ast],
\end{split}\end{equation*}
here we have used that $$|\Hom_{\mathcal{C}_2(\mathcal{A})}(C_{A_1},C_{B_2}^\ast)|=q^{\lr{\Omega_{A_1},P_{B_2}}}.$$
By Lemma \ref{zhongjian},
\begin{equation*}\begin{split}
[C_{A_1}][C_{B_2}^\ast]&=\sum_{[X],[Y],[L]}v^{a}a_Lg_{LX}^{A_1}g_{YL}^{B_2}K_TK_W^\ast[C_X\oplus C_Y^\ast]\\
&=\sum_{[X],[Y],[L]}v^{a}a_Lg_{LX}^{A_1}g_{YL}^{B_2}K_{\hat{\Omega}_{A_1}-\hat{\Omega}_X}K_{\hat{P}_{B_2}-\hat{P}_Y}^\ast[C_X\oplus C_Y^\ast],
\end{split}\end{equation*}
where \begin{equation*}\begin{split}
a&=\lr{P_{A_1},\Omega_{B_2}}-\lr{\Omega_{A_1},P_{B_2}}+\lr{\hat{W}-\hat{T},\hat{X}-\hat{Y}}\\
&=\lr{P_{A_1},\Omega_{B_2}}-\lr{\Omega_{A_1},P_{B_2}}+\lr{\hat{P}_{B_2}+\hat{\Omega}_X-\hat{\Omega}_{A_1}-\hat{P}_Y,\hat{X}-\hat{Y}}.
\end{split}\end{equation*}
\end{proof}

\noindent$\mathbf{Proof~~of~~Main~~Theorem}$~~~~

\begin{equation*}\begin{split}
&\mbox{LHS~~of}~~(\ref{Drinfeld2})\\&=\sum_{\begin{smallmatrix}[A_1],[A_2],[B_2],\\ [L],[X],[Y]\end{smallmatrix}}v^{a+x}
g_{A_1A_2}^Ag_{A_2B_2}^Bg_{LX}^{A_1}g_{YL}^{B_2}
a_{A_2}a_LK_{\hat{A}_2-\hat{\Omega}_{A_1}}K_{-\hat{\Omega}_{B_2}}^\ast K_{\hat{\Omega}_{A_1}-\hat{\Omega}_X}K_{\hat{P}_{B_2}-\hat{P}_Y}^\ast[C_X\oplus C_Y^\ast]\\
&=\sum_{\begin{smallmatrix}[A_1],[A_2],[B_2],\\ [L],[X],[Y]\end{smallmatrix}}v^{\lr{\hat{B}_2+\hat{\Omega}_X-\hat{P}_Y-\hat{A}_2,\hat{X}-\hat{Y}}}
g_{A_1A_2}^Ag_{A_2B_2}^Bg_{LX}^{A_1}g_{YL}^{B_2}
a_{A_2}a_LK_{\hat{A}_2-\hat{\Omega}_X}K_{\hat{B}_2-\hat{P}_Y}^\ast[C_X\oplus C_Y^\ast]\\
&=\sum_{\begin{smallmatrix}[A_1],[A_2],[B_2],\\ [L],[X],[Y]\end{smallmatrix}}v^{\lr{\hat{Y}+\hat{L}+\hat{\Omega}_X-\hat{P}_Y-\hat{A}_2,\hat{X}-\hat{Y}}}
g_{A_1A_2}^Ag_{A_2B_2}^Bg_{LX}^{A_1}g_{YL}^{B_2}
a_{A_2}a_LK_{\hat{A}_2-\hat{\Omega}_X}K_{\hat{Y}+\hat{L}-\hat{P}_Y}^\ast[C_X\oplus C_Y^\ast]\\
&=\sum_{\begin{smallmatrix}[A_2],[L],\\ [X],[Y]\end{smallmatrix}}v^{\lr{\hat{L}+\hat{\Omega}_X-\hat{\Omega}_Y-\hat{A}_2,\hat{X}-\hat{Y}}}
g_{LXA_2}^Ag_{A_2YL}^B
a_{A_2}a_LK_{\hat{A}_2-\hat{\Omega}_X}K_{\hat{L}-\hat{\Omega}_Y}^\ast[C_X\oplus C_Y^\ast],
\end{split}\end{equation*}here we get the last equality by using the associativity formula in $(\ref{jiehe})$.

\begin{equation*}\begin{split}
&\mbox{LHS~~of}~~(\ref{Drinfeld2})\\&=\sum_{\begin{smallmatrix}[A'_1],[A'_2],[B'_1],\\ [L'],[X],[Y]\end{smallmatrix}}v^{a'+x'}
g_{A'_1A'_2}^Ag_{B'_1A'_1}^Bg_{L'Y}^{B'_1}g_{XL'}^{A'_2}
a_{A'_1}a_{L'}K_{\hat{A}'_1-\hat{\Omega}_{B'_1}}^\ast K_{-\hat{\Omega}_{A'_2}} K_{\hat{P}_{A'_2}-\hat{P}_X}K_{\hat{\Omega}_{B'_1}-\hat{\Omega}_Y}^\ast[C_X\oplus C_Y^\ast]\\
&=\sum_{\begin{smallmatrix}[A'_1],[A'_2],[B'_1],\\ [L'],[X],[Y]\end{smallmatrix}}v^{\lr{\hat{A}'_1+\hat{P}_X-\hat{\Omega}_Y-\hat{A}'_2,\hat{X}-\hat{Y}}}
g_{A'_1A'_2}^Ag_{B'_1A'_1}^Bg_{L'Y}^{B'_1}g_{XL'}^{A'_2}
a_{A'_1}a_{L'}K_{\hat{A}'_2-\hat{P}_X}K_{\hat{A}'_1-\hat{\Omega}_Y}^\ast[C_X\oplus C_Y^\ast]\\
&=\sum_{\begin{smallmatrix}[A'_1],[A'_2],[B'_1],\\ [L'],[X],[Y]\end{smallmatrix}}v^{\lr{\hat{A}'_1+\hat{P}_X-\hat{\Omega}_Y-\hat{X}-\hat{L}',\hat{X}-\hat{Y}}}
g_{A'_1A'_2}^Ag_{B'_1A'_1}^Bg_{L'Y}^{B'_1}g_{XL'}^{A'_2}
a_{A'_1}a_{L'}K_{\hat{X}+\hat{L}'-\hat{P}_X}K_{\hat{A}'_1-\hat{\Omega}_Y}^\ast[C_X\oplus C_Y^\ast]\\
&=\sum_{\begin{smallmatrix}[L'], [A'_1],\\ [X],[Y]\end{smallmatrix}}v^{\lr{\hat{A}'_1+\hat{\Omega}_X-\hat{\Omega}_Y-\hat{L}',\hat{X}-\hat{Y}}}
g_{A'_1XL'}^Ag_{L'YA'_1}^B
a_{L'}a_{A'_1}K_{\hat{L}'-\hat{\Omega}_X}K_{\hat{A}'_1-\hat{\Omega}_Y}^\ast[C_X\oplus C_Y^\ast].
\end{split}\end{equation*}
Identifying $[L]$ and $[A_2]$ in LHS of $(\ref{Drinfeld2})$ with $[A'_1]$ and $[L']$ in RHS of $(\ref{Drinfeld2})$, respectively, we obtain that
\begin{equation*}
\mbox{LHS~~of}~~(\ref{Drinfeld2})=\mbox{RHS~~of}~~(\ref{Drinfeld2}).\end{equation*}
\fin

\section{Appendix: A remark on modified Ringel--Hall algebras}
In this section, we briefly give a proof of Main Theorem $4.11$ in \cite{LP} by using the  associativity formula of Hall algebras without \cite[Lemma 4.10]{LP}. Let $\A$ be a finitary hereditary abelian $k$-category, and we do not assume that it has enough projectives. Inspired by the works of Bridgeland \cite{Br} and Gorsky \cite{Gor2}, Lu and Peng \cite{LP} introduced an algebra $\mathcal {M}\mathcal {H}_{\mathbb{Z}/2,tw}(\A)$, called the {\em modified Ringel--Hall algebra} of $\A$, with the purpose of generalizing Bridgeland's construction to any hereditary abelian categories satisfying certain finiteness conditions.
For unexplained notations (such as $C_X,C_Y^\ast,K_X$, and $K_Y^\ast$) concerning the modified Ringel--Hall algebra we refer to \cite{LP}.

For any $A,B\in\A$, we consider the action of the group $\Aut_{\A}(A)\times\Aut_{\A}(B)$ on $\Hom_{\A}(A,B)$, which is defined by the following commutative diagram
$$\xymatrix{A\ar[r]^-g\ar[d]_-s&B\ar[d]^-t\\
A\ar[r]^-{g'}&B,}$$ for any $g\in\Hom_{\A}(A,B), s\in\Aut_{\A}(A)$ and $t\in\Aut_{\A}(B)$. We denote by $O_g$ the orbit of $g$ for any $g\in\Hom_{\A}(A,B)$, and set $O=\{O_g~|~g\in\Hom_{\A}(A,B)\}$. Clearly, we have the following equations
\begin{equation}\label{huafeng}\sum_{\begin{smallmatrix}O_g\in O: g\in{\rm Hom}_{\A}(A,B)\\
{\rm ker} g\cong X\\
{\rm coker} g\cong Y\end{smallmatrix}}|O_g|=|{}_X\Hom_{\A}(A,B)_Y|=\sum_{[L]}a_Lg_{LX}^{A}g_{YL}^{B}.\end{equation}
By $(\ref{huafeng})$, we reformulate \cite[Lemma 4.9]{LP} as follows.
\begin{lemma}{\rm(\cite{LP})}\label{AL1}
$(1)$~~For any $A_1,B_2\in\A$. In $\mathcal {M}\mathcal {H}_{\mathbb{Z}/2,tw}(\A)$ we have
$$[C_{A_1}]\ast[C_{B_2}^\ast]=\sum_{[X],[Y],[L]}v^{\lr{\hat{Y}-\hat{X},\hat{B}_2-\hat{Y}}}a_Lg_{LX}^{A_1}g_{YL}^{B_2}[C_X\oplus C_Y^\ast]\ast[K_{\hat{B}_2-\hat{Y}}^\ast];$$

$(2)$~~For any $A_2,B_1\in\A$. In $\mathcal {M}\mathcal {H}_{\mathbb{Z}/2,tw}(\A)$ we have
$$[C_{B_1}^\ast]\ast[C_{A_2}]=\sum_{[X],[Y],[L']}v^{\lr{\hat{X}-\hat{Y},\hat{A}_2-\hat{X}}}a_{L'}g_{L'Y}^{B_1}g_{XL'}^{A_2}[C_X\oplus C_Y^\ast]\ast[K_{\hat{A}_2-\hat{X}}].$$
\end{lemma}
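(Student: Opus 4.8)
The plan is to obtain Lemma \ref{AL1} as a direct reformulation of \cite[Lemma 4.9]{LP}, the only genuinely new ingredient being the combinatorial identity (\ref{huafeng}); structurally the argument mirrors the proof of Lemma \ref{mainlemma}, but with the projective-resolution complexes replaced by their counterparts in the modified setting of \cite{LP}. First I would recall the formula of \cite[Lemma 4.9]{LP} for $[C_{A_1}]\ast[C_{B_2}^\ast]$, in which the coefficient of $[C_X\oplus C_Y^\ast]\ast[K_{\hat{B}_2-\hat{Y}}^\ast]$ is expressed in terms of the number of morphisms $g\in\Hom_{\A}(A_1,B_2)$, counted up to the $\Aut_{\A}(A_1)\times\Aut_{\A}(B_2)$-action (i.e. by the orbits $O_g$), whose kernel is isomorphic to $X$ and whose cokernel is isomorphic to $Y$. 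The conceptual point is the same as in Lemma \ref{mainlemma}: such a $g$ is exactly the connecting map of the four-term exact sequence $0\to X\to A_1\xrightarrow{g} B_2\to Y\to 0$ extracted from the long exact homology sequence of an extension of $C_{A_1}$ by $C_{B_2}^\ast$.

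The key step is then to identify this orbit count with the left-hand side of (\ref{huafeng}), namely $|{}_X\Hom_{\A}(A_1,B_2)_Y|$, and to invoke (\ref{huafeng}) --- which itself rests on the identity $|{}_X\Hom_{\A}(A,B)_Y|=\sum_{[L]}a_Lg_{LX}^{A}g_{YL}^{B}$ coming from \cite[(8.8)]{Vanden} --- to rewrite the coefficient as $\sum_{[L]}a_Lg_{LX}^{A_1}g_{YL}^{B_2}$. Substituting this into \cite[Lemma 4.9]{LP} yields part $(1)$ verbatim. Part $(2)$ then follows by the symmetric argument applied to $[C_{B_1}^\ast]\ast[C_{A_2}]$ (equivalently, by transporting $(1)$ through the shift involution $\ast$, which exchanges the two gradings and interchanges the roles of kernel and cokernel): this produces the four-term sequence $0\to X\to A_2\to B_1\to Y\to 0$ and, via (\ref{huafeng}), the dual coefficient $\sum_{[L']}a_{L'}g_{L'Y}^{B_1}g_{XL'}^{A_2}$ together with the twist $\lr{\hat{X}-\hat{Y},\hat{A}_2-\hat{X}}$.

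The main obstacle I anticipate is bookkeeping rather than anything conceptual. One must verify that the power of $v$ recorded in \cite[Lemma 4.9]{LP} matches the stated exponent $\lr{\hat{Y}-\hat{X},\hat{B}_2-\hat{Y}}$ once the acyclic correction term $[K_{\hat{B}_2-\hat{Y}}^\ast]$ is split off. The subtlety here is that, because $\A$ need not have enough projectives, the acyclic remainder of the middle complex is governed directly by the classes $\hat{X},\hat{Y},\hat{B}_2$ in $K_0(\A)$ rather than by honest projective covers, so the Euler-form twist has to be tracked through the definitions of $C_X$, $C_Y^\ast$ and $K_\alpha^\ast$ in the modified Ringel--Hall algebra. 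Once the normalizations of \cite[Lemma 4.9]{LP} are aligned with (\ref{huafeng}), the reformulation is immediate; crucially it makes no appeal to \cite[Lemma 4.10]{LP}, which is precisely what allows the subsequent commutator relations to be checked purely by the associativity formula (\ref{jiehe}).
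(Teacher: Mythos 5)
Your proposal matches the paper's treatment exactly: the paper likewise obtains Lemma \ref{AL1} as a direct reformulation of \cite[Lemma 4.9]{LP}, using the identity (\ref{huafeng}) (whose second equality rests on \cite[(8.8)]{Vanden}) to convert the orbit-count coefficients $\sum|O_g|$ into the Hall-number expression $\sum_{[L]}a_Lg_{LX}^{A}g_{YL}^{B}$, with part $(2)$ following by symmetry and the Euler-form exponent tracked through the change from multiplicative to additive notation. No discrepancy with the paper's argument.
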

\begin{remark}
In Lemma $\ref{AL1}$, we have employed the additive Euler form rather than the multiplicative one used in \cite{LP}.
\end{remark}

Using Lemma $\ref{AL1}$, we simplify the proof of \cite[Thm. 4.11]{LP} as follows.
\begin{equation*}\begin{split}
&\mbox{LHS~~of~~the~~equation}~~(19)~~\mbox{in~~\cite{LP}}\\&=
\sum_{[A_1],[A_2],[B_2]}v^{\lr{A_1,A_2}+\lr{A_2,B_2}-(A_2,B_2)}g_{A_1A_2}^Ag_{A_2B_2}^Ba_{A_2}[C_{A_1}]\ast[C_{B_2}^\ast]\ast K_{\hat{A}_2}\\&=
\sum_{\substack{[A_1],[A_2],[B_2],\\
[X],[Y],[L]}}v^{\lr{\hat{A}_1-\hat{B}_2,\hat{A}_2}+\lr{\hat{Y}-\hat{X},\hat{B}_2-\hat{Y}}}
g_{A_1A_2}^Ag_{A_2B_2}^Bg_{LX}^{A_1}g_{YL}^{B_2}a_{A_2}a_{L}[C_X\oplus C_Y^\ast]\ast K_{\hat{B}_2-\hat{Y}}^\ast\ast K_{\hat{A}_2}\\&
\xlongequal[\hat{B}_2=\hat{Y}+\hat{L}]{\hat{A}_1=\hat{L}+\hat{X}}\sum_{\substack{[A_2],[L],\\
[X],[Y]}}v^{\lr{\hat{X}-\hat{Y},\hat{A}_2-\hat{L}}}g_{LXA_2}^Ag_{A_2YL}^Ba_{A_2}a_L[C_X\oplus C_Y^\ast]\ast K_{\hat{L}}^\ast \ast K_{\hat{A}_2},
\end{split}\end{equation*}here we have used the associativity formula in $(\ref{jiehe})$.

\begin{equation*}\begin{split}
&\mbox{RHS~~of~~the~~equation}~~(19)~~\mbox{in~~\cite{LP}}\\&=
\sum_{[\tilde{A}_1],[\tilde{A}_2],[\tilde{B}_1]}v^{\lr{\tilde{A}_1,\tilde{A}_2}+\lr{\tilde{B}_1,\tilde{A}_1}-(\tilde{A}_1,\tilde{A}_2)}g_{\tilde{A}_1\tilde{A}_2}^Ag_{\tilde{B}_1\tilde{A}_1}^Ba_{\tilde{A}_1}[C_{\tilde{B}_1}^\ast]\ast[C_{\tilde{A}_2}]\ast K_{\hat{\tilde{A}}_1}^\ast\\&=
\sum_{\substack{[\tilde{A}_1],[\tilde{A}_2],[\tilde{B}_1],\\
[X],[Y],[L']}}v^{\lr{\hat{\tilde{B}}_1-\hat{\tilde{A}}_2,\hat{\tilde{A}}_1}+\lr{\hat{X}-\hat{Y},\hat{\tilde{A}}_2-\hat{X}}}
g_{\tilde{A}_1\tilde{A}_2}^Ag_{\tilde{B}_1\tilde{A}_1}^Bg_{L'Y}^{\tilde{B}_1}g_{XL'}^{\tilde{A}_2}a_{L'}a_{\tilde{A}_1}[C_X\oplus C_Y^\ast]\ast K_{\hat{\tilde{A}}_2-\hat{X}}\ast K_{\hat{\tilde{A}}_1}^\ast\\&
\xlongequal[\hat{\tilde{A}}_2=\hat{X}+\hat{L'}]{\hat{\tilde{B}}_1=\hat{L'}+\hat{Y}}\sum_{\substack{[\tilde{A}_1],[L'],\\
[X],[Y]}}v^{\lr{\hat{X}-\hat{Y},\hat{L'}-\hat{\tilde{A}}_1}}g_{\tilde{A}_1XL'}^Ag_{L'Y\tilde{A}_1}^Ba_{L'}a_{\tilde{A}_1}[C_X\oplus C_Y^\ast]\ast K_{\hat{\tilde{A}}_1}^\ast \ast K_{\hat{L'}}.
\end{split}\end{equation*}
Identifying $[L]$ and $[A_2]$ in LHS with $[\tilde{A}_1]$ and $[L']$ in RHS, respectively, we obtain that
\begin{equation*}
\mbox{LHS~~of~~the~~equation}~~(19)~~\mbox{in~~\cite{LP}}=\mbox{RHS~~of~~the~~equation}~~(19)~~\mbox{in~~\cite{LP}}.\end{equation*}

\begin{remark}
The preliminary part (Lemma \ref{mainlemma}) for the proof of Main Theorem is similar to \cite[Lemma 4.9]{LP}, but the calculation methods are not the same. In this note, we explicitly work out the coefficients in the summation via Hall numbers. While, Lu and Peng introduced orbit sets $O_g$ and express the coefficients by $|O_g|$. For the conclusive part, we are reduced to use the associativity formula of Hall algebras, and Lu and Peng introduced once more two sets and give a characterization of the cardinalities of these sets (\cite[Lemma 4.10]{LP}), then they are reduced to the equality of the cardinalities of these two sets. In some sense, we avoid computing the cardinalities of the sets defined by Lu and Peng, this work seems to be equivalent to the proof of the associativity of Hall algebras. However, the advantage of their proof is that we have better understanding of the essence of the coefficients in the commutator relation $(\ref{Drinfeld2})$.
\end{remark}

%Consider an extension of $C_{A_1}$ by $C_{B_2}^\ast$
%$$0\longrightarrow C_{B_2}^\ast\longrightarrow L_\bullet \longrightarrow C_{A_1}\longrightarrow 0,$$
%it is easy to see that $L_\bullet$ is of the form
%\begin{equation*}L_\bullet=\xymatrix{{B_2}\ar@<0.7ex>[r]^{0}&{A_1.}\ar@<0.7ex>[l]^{g}}\end{equation*}
%For any $X,Y\in\A$, we denote by ${}_X\Hom_{\A}(A_1,B_2)_Y$ the set $$\{g~|~\xymatrix{0\ar[r]&X\ar[r]&A_1\ar[r]^g&B_2\ar[r]&Y\ar[r]&0}~~\mbox{is exact in}~~\A\},$$
%and then it is also easy to see that \begin{equation}\label{lHoml}|{}_X\Hom_{\A}(A_1,B_2)_Y|=\sum_{[L]}a_{L}F_{LX}^{A_1}F_{YL}^{B_2}.\end{equation}
%
%On the one hand,
%
%
%
%
%On the other hand, by \cite[Lemma 3.1]{Gor},
%\begin{equation*}
%\Ext_{\mathcal{C}_2(\mathcal{A})}^1(C_{A_1},C_{B_2}^\ast)\cong \Ext_{\mathcal{K}_2(\mathcal{A})}^1(C_{A_1},C_{B_2}^\ast)=\Hom_{\mathcal{K}_2(\mathcal{A})}(C_{A_1},C_{B_2}).
%\end{equation*}
%Since for any $g_1,g_2\in\Hom_{\mathcal{C}_2(\mathcal{A})}(C_{A_1},C_{B_2})$, $g_1$ is homotopic to $g_2$ if and only if $g_1=g_2$, we obtain that $\Hom_{\mathcal{K}_2(\mathcal{A})}(C_{A_1},C_{B_2})=\Hom_{\mathcal{C}_2(\mathcal{A})}(C_{A_1},C_{B_2})\cong
%\Hom_{\A}(A_1,B_2).$ That is,
%$$\Ext_{\mathcal{C}_2(\mathcal{A})}^1(C_{A_1},C_{B_2}^\ast)\cong\Hom_{\A}(A_1,B_2).$$

%\acknowledgements{ The author is grateful to Professor Bangming Deng for his encouragement and help.}
\section*{Acknowledgments}

The author is grateful to Ming Lu for his careful reading and helpful comments, and Professor Bangming Deng for his patience guidance and valuable comments. He also would like to thank Shiquan Ruan and Panyue Zhou for their help in his study and life.

\end{document}